\def\hat{\widehat}
\newcommand{\N}{\mathbb N}
\newcommand{\bs}{\boldsymbol}
\def\sfd{{\sf d}}
\newtheorem{prop}{Proposition}
\newtheorem{lem}{Lemma}
\numberwithin{lem}{section}
\newenvironment{myproof}[1][Proof]%
  {%
   \par\noindent{\bfseries\upshape #1\ }%
  }%
  {\qed}
\newcommand*{\qed}{\null\nobreak\hfill\ensuremath{\blacksquare}}%
\title[Statistical guarantees for generative models]{Statistical guarantees for generative models without domination}
\begin{document}

\maketitle

\begin{abstract}%
  In this paper, we introduce a convenient framework for 
  studying (adversarial) generative models from a statistical perspective. 
  It consists in modeling the generative device as a smooth
  transformation of the unit hypercube of a dimension that 
  is much smaller than that of the ambient space and 
  measuring the quality of the generative model by means of
  an integral probability metric. In the particular case 
  of integral probability metric defined through a smoothness
  class, we establish a risk bound quantifying the role of
  various parameters. In particular, it clearly shows the
  impact of dimension reduction on the error of the 
  generative model.%
\end{abstract}

\begin{keywords}%
  Generative model, risk bound, smoothness class%
\end{keywords}

\section{Introduction}

The problem of learning generative models has attracted a 
lot of attention during the last 5 years in machine learning 
and artificial intelligence. The most prominent example is 
generating artificial images that look similar to actual 
photographs, by means of generative adversarial networks. 
The more general formulation of the problem can be given 
as a game between the user and the learner. The user samples 
a set of elements (images of natural scenes, poems, pieces 
of music, etc.) from a hidden distribution $P^* = 
P_{\textup{user}}$ defined on a hidden (and not so well 
known) space. The learner receives a noisy and possibly 
contaminated version of these elements and aims at 
generating a new set of elements, that are different 
from those transmitted by the user, but that could have 
been sampled from the hidden distribution $P^*$. Note 
that the revealed elements are usually of very high 
dimension. However, they may exhibit rich structures such 
as the harmonic and rhythmic schemes followed by a melody 
or a poem, or the presence of simple shapes in an image. 
It is therefore reasonable to assume that these elements
can be represented by means of a much lower dimensional 
latent variable, which is unobserved. 

In other words, generative models are used for accomplishing 
the following task. The user draws $n$ independent samples 
$\bs Y_1,\ldots,\bs Y_n$ from a distribution 
$P_{\textup{user}}$ defined on $\mathbb R^D$. The learner is given 
a noisy and contaminated version  $\bs X_1, \ldots,\bs X_n$  
of this sample. The goal of the learner is to design an algorithm that
generates random samples from a distribution
$P_{\textup{learner}}$ which is as close as possible to
$P_{\textup{user}}$. This can be viewed as a distribution
estimation problem with two requirements: 
\vspace{-8pt}
\begin{itemize}[leftmargin=0.7in]\itemsep=2pt
    \item[{\bf [\hypertarget{R1}{R1}]}]\it It should be 
    easy to sample from $P_{\textup{\rm learner}}$.
    \item[{\bf [\hypertarget{R2}{R2}]}]\it The way we 
    measure the closeness between  $P_{\textup{\rm learner}}$ 
    and $P_{\textup{\rm user}}$ for evaluating the error 
    has to admit an interpretation as a sampling error. 
\end{itemize}
\vspace{-8pt}
Of course, this formulation is incomplete since it allows to 
take the uniform distribution over the observed samples as
$P_{\textup{learner}}$, \textit{i.e.}, $P_{\textup{learner}} 
= \hat P_n = \frac1n\sum_{i=1}^n \delta_{\bs X_i}$ (the 
empirical distribution based on the sample $\bs X_1,\ldots,
\bs X_n$). From a generative modeling perspective, 
$\hat P_n$ is pointless since it does not yield new samples 
that are different from the previous ones. Hence, generative 
modeling requires a third distinctive feature:
\vspace{-8pt}
\begin{itemize}[leftmargin=0.7in]
    \item[{\bf [\hypertarget{R3}{R3}]}] \it Samples drawn 
    from $P_{\textup{learner}}$ should be different from those 
    revealed by the user. 
\end{itemize}
\vspace{-8pt}
Requirement \hyperlink{R3}{\bf R3} is perhaps the hardest to
translate into a statistical language. Most prior work focused
on the case where both $P_{\textup{user}}$ and $P_{\textup{learner}}$,
defined on $\mathbb R^D$ equipped with the Borel $\sigma$-field, 
are absolutely continuous with respect to the Lebesgue measure 
(or another $\sigma$-finite measure).  
This readily implies that the total variation distance between 
$P_{\textup{learner}}$ and $\hat P_n$ is equal to 1, which can be
considered as a guarantee for $P_{\textup{learner}}$ to satisfy 
\hyperlink{R3}{\bf R3}.

Positing that $P_{\textup{user}}$ has a density with respect
to the Lebesgue measure, or any other dominating $\sigma$-finite 
measure $\mu$ on $\mathbb R^D$,  is, in general, incompatible 
with the fact that $P_{\textup{user}}$ is inherited from a 
low-dimensional latent variable and supported by a low-dimensional
manifold. For instance, in the simple example of $P_{\textup{user}} 
= \mathcal U(a\mathbb S^{D-1})$, 
the uniform measure  on $a\mathbb S^{D-1}$ (the sphere of 
radius $a$ centered at the origin), there exists no 
$\sigma$-finite measure dominating all the measures 
$\mathcal U(a\mathbb S^{D-1})$, for $a>0$. Very importantly, 
as a consequence of the restriction to dominated 
distributions, the available statistical results fail
to assess the positive impact of the reduced dimension 
of the latent space (as compared to the ambient dimension 
$D$) on the quality of the generative model.

We propose to circumvent this drawback by restricting the
set of candidate generators to those defined as a 
smooth transformation of the uniform distribution on a
low-dimensional hyper-cube. Obviously, the support of these 
candidate distributions is a path-connected set. Therefore, 
the empirical distribution $\hat P_n$, as well as any 
finitely or countably supported distribution is not among these candidates. 

The following notation will be used throughout this work. 
For every positive integer $p$, we denote by $\mathcal U_p$
the uniform distribution on the hyper-cube $[0,1]^p$. 
For any convex set $\mathcal X\subset \mathbb R^p$, $\textup{Lip}_L(\mathcal 
X)$ stands for the set of all Lipschitz-continuous functions 
defined on $\mathcal X$ with a Lipschitz constant less than 
or equal to $L$. For a distribution $P$ defined on a 
measurable space $(E,\mathscr E)$ and a measurable 
map $g:E\mapsto F$, where $F$ is another space endowed with 
a $\sigma$-algebra $\mathscr F$, we denote by $g\sharp P$
the ``push-forward'' measure defined by $(g\sharp P)(A) = 
P\big(g^{-1}(A)\big)$ for all $A\in \mathscr F$. For a
function $g:\mathcal X\to\mathbb R$, $\|g\|_\infty = \max_{
x\in\mathcal X} |g(x)|$ is the supremum norm of $g$.

The rest of the paper is organised as follows. A brief 
review of the prior work on generative models is presented
in Section~\ref{sec:2}, while Section~\ref{sec:3} provides the formal 
statement of the problem. In order to convey the main ideas
in a simple setting, we analyse the case of noise-free and 
uncontaminated observations in Section~\ref{sec:4}. The main results
are stated and discussed in Section~\ref{sec:5}. A summary of the
contributions and some avenues for future research are 
included in Section~\ref{sec:6}, while Section~\ref{sec:7} gathers the
proofs of the results stated in previous sections.

\section{Related work (and contributions)}\label{sec:2}

The procedures for generative modeling can be split into two groups: prescribed and implicit probabilistic models \citep{mohamed2016learning}. The former requires an explicit (parametric) specification of the distribution of the observed random variables (\textit{e.g.}, 
mixture of Gaussian) through a likelihood function, whereas the 
latter defines a stochastic procedure that directly generates data. 
The growing complexity of the data makes it harder to design a 
relevant likelihood function and thus favoured the advent of the latter models.
For instance, Generative Adversarial Networks (GANs), perhaps the most well-known generative models based on implicit modeling, enabled groundbreaking advances in the generation of realistic images \citep{goodfellow2014generative,radford2015unsupervised,goodfellow2016nips,isola2017image,zhu2017unpaired,brock2018large,karras2019style}. 
In the original GAN framework \citep{goodfellow2014generative} a 
generator $G$ competes against a discriminator $D$, both implemented 
as deep neural networks, in the following zero-sum game: the 
generator $G$ (resp.\ the discriminator $D$) maximizes (resp.\ 
minimizes) the objective
\begin{align}
\label{eq:GAN}
\Phi(G,D) = \frac1n\sum_{i=1}^n \log D(\bs X_i) 
+ \mathbf{E}_{\tilde{\bs X}\sim G\,\sharp\, P_{\bs U}}\log\big(1-D(\tilde{\bs X})\big),
\end{align}
where $P_{\bs U}$ is an easy-to-sample-from noise distribution
(\textit{e.g.}, Gaussian or uniform). The goal of the generator 
is to transform the (low-dimensional) latent variable into
artificial data as indistinguishable as possible from the
examples drawn from the target distribution. As for the 
discriminator, the aim is to discriminate between true 
examples and generated data. See Figure~\ref{fig:VanGoghGAN} for an illustration of the original GAN model. Informally, the generative model
can be thought of as a counterfeiter, trying to produce fake 
paintings and selling it without detection, while the 
discriminative model is analogous to art experts, trying to 
detect the counterfeit paintings. Let us note that here $P_{\textup{learner}}$ would be the distribution of the 
generated data, \textit{i.e.}, $G\,\sharp\, P_{\bs U}$.

Despite their impressive empirical performance, GANs are notoriously hard to train; Even if some fixes have been proposed \citep{salimans2016improved}, several problems are yet to be fully understood and solved (\textit{e.g.}, mode collapse, vanishing gradients, failure to converge). \citet{goodfellow2014generative} showed that, when the discriminator is optimal, minimizing \eqref{eq:GAN} with respect to the generator $G$ amounts to minimizing the Jensen-Shannon (JS) divergence between the generated data distribution and the real sample distribution. Arguing that the topology induced by the JS divergence is rather coarse, \cite{arjovsky2017wasserstein} proposed to replace this divergence by the Wasserstein-1 distance to stabilize training, leading to the so-called \emph{Wasserstein GAN}.
More precisely, the goal of the generator $G$ in this variant is to generate data from a distribution that is as close as
possible, w.r.t.\ the Wasserstein-1 distance, to the empirical
distribution of the original data. This leads to the objective
\begin{align}
\label{eq:WGAN}
{\mathsf W}_1\big(G\sharp P_{\bs U}, \hat P_n\big) = 
\sup_{f \in \textup{Lip}_1(\mathcal X)} \bigg| 
\frac1n\sum_{i=1}^n f(\bs X_i) - \mathbf{E}_{\tilde{\bs X}
\sim G\,\sharp\, P_{\bs U}} f(\tilde{\bs X})\bigg|.
\end{align}
In view of this relation, which follows from
the Kantorovitch-Rubinstein duality theorem  
\citep[Theorem 5.9, Remark 6.5]{villani2008optimal}, the 
Wasserstein distance admits a nice interpretation as a
sampling error. Replacing the class of Lipschitz functions
by an arbitrary functional class $\mathcal F$, we obtain general Integral Probability Metrics\footnote{The 
precise definition of an IPM can be found in \eqref{IPM}.} 
(IPM): a class of pseudo-metrics on the space of probability 
measures \citep{muller1997integral}. We refer the reader to \citet{liang2019estimating,sriperumbudur2012empirical} 
for statistical results related to IPM. An IPM can 
naturally be interpreted as an adversarial loss: to 
compare two probability distributions, it seeks for the 
function $f^*$ in $\mathcal F$ for which the expectations 
of $f(\bs X)$ under the two distributions have the largest 
discrepancy. This formalization enables to study a family 
of pseudo-metrics which encompasses the Wasserstein-1 
distance and generalises the Wasserstein GAN problem. 
In particular, in this work, we will consider IPM indexed 
by Sobolev-type classes of functions.

\begin{figure}
    \centering
    \includegraphics[width = 0.8\textwidth]{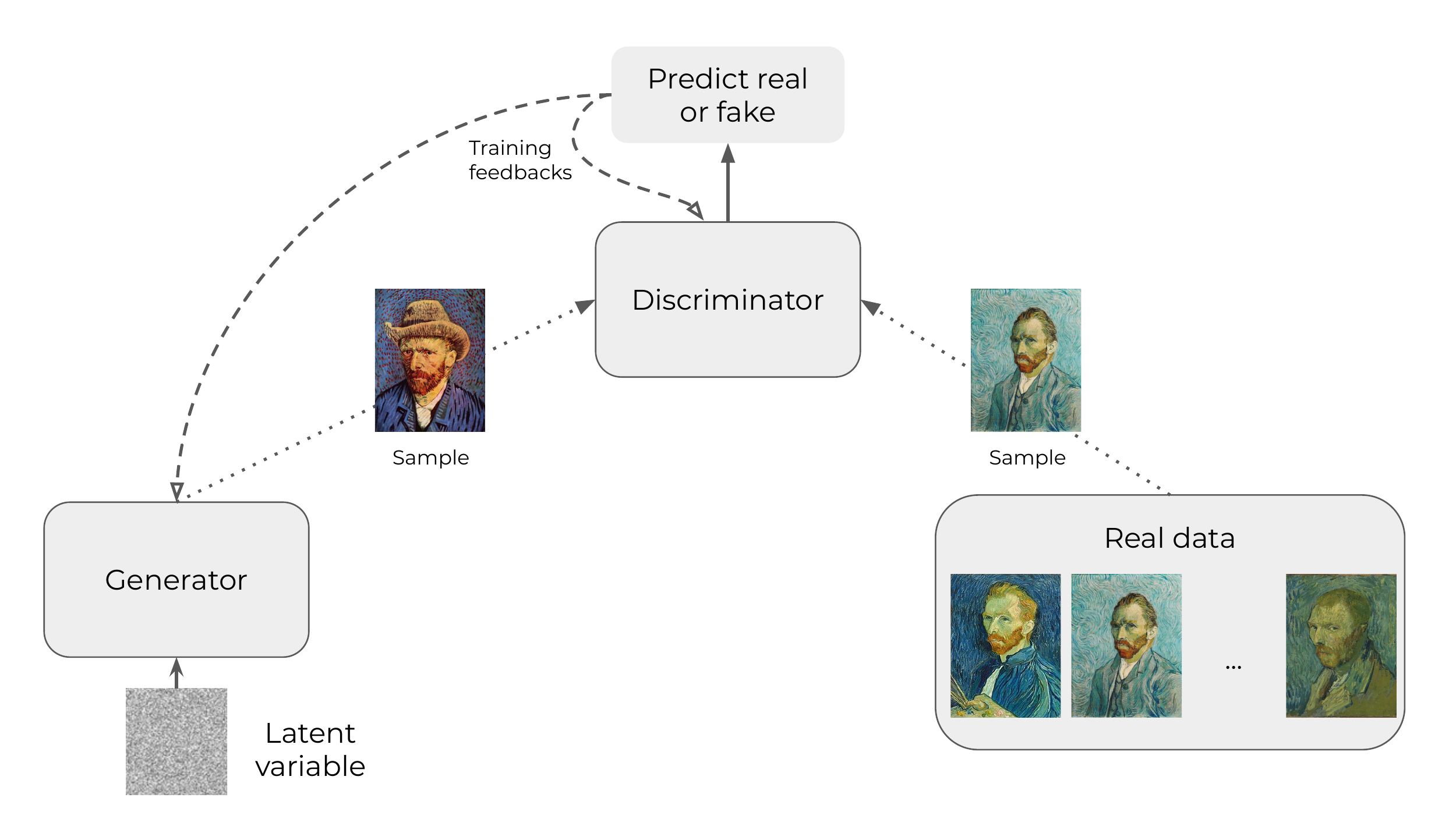}
    \caption{Illustration of the original GAN model. During the training phase, real data and generated data are fed to the discriminator (dotted arrows) which in
    turn must predict which data is real and which is fake. Feedback (in the form of gradients of the loss) are then sent to the generator and the discriminator (broken arrows) based on predictions from the latter to update their parameters (through back-propagation in the case of neural networks). Note that the generator does not directly have access to real data.}
    \label{fig:VanGoghGAN}
\end{figure}

Since GANs initially emerged from the deep learning community, 
the first line of work primarily relied on empirical insights 
and general mathematical intuitions. Later on, a parallel 
line of work tackled the GAN problem from the statistical 
perspectives \citep{biau2020some,biau2018some,
chen2020statistical,liang2018well,singh2018nonparametric,
luise2020generalization,uppal2019} as well as optimization 
and algorithmic viewpoints \citep{liang2018interaction,
kodali2017convergence,pfau2016connecting,nie2020towards,
nagarajan2017gradient,genevay2017learning,genevay2018sample}.

From a statistical perspective, the usual goal is to obtain 
a bound on the discrepancy between the learned distribution 
$P_{\textup{learner}}$ and the true distribution of the data 
$P^* = P_{\textup{user}}$ with respect to a given evaluation 
metric $\sfd$.  
A particularly relevant task is the quantification of the 
rate of convergence to zero of this discrepancy as 
the sample size $n$ grows to infinity. Given a family 
of candidate distributions $\mathcal{P}$, typical bounds 
are of the form
\begin{align}
    \mathbf{E}_{(X_1, \dots, X_n)\sim P_{\textup{obs}} }
    \big[\sfd(P_{\textup{learner}}, P_{\textup{user}})\big] 
    - \inf_{P \in \mathcal{P}} \sfd(P, 
    P_{\textup{user}})  \lesssim n^{-r(\alpha, \beta, d, D)}.
\end{align}
for some exponent $r(\alpha,\beta,d,D)>0$, where the parameter 
$\alpha$ characterises the \textit{complexity} of the 
discriminator (\textit{e.g.},  the smoothness of the class 
$\mathcal F$ used in the IPM), $\beta$ represents the 
\textit{smoothness} of the generator, $d$ is the intrinsic 
dimension of the data, (\textit{i.e.}, the dimension of the 
latent variable $\bs U$) and $D$ is the ambient 
dimension (\textit{e.g.}, the number of pixels in an image). 
Since $D$ is typically much larger than $d$, it is suitable 
to avoid any dependence on $D$ in the exponent 
$r(\alpha,\beta,d,D)$. 

\cite{chen2020statistical,liang2018well,singh2018nonparametric,
uppal2019} obtained rates depending on the smoothness of 
the density of the target distribution and (eventually) on the 
smoothness of the class $\mathcal F$ of admissible 
discriminators. Their rates do depend on the ambient 
dimension $D$, leading to the curse of dimensionality 
phenomenon; they do not account for possible 
low-dimensionnality of the data. Moreover, the learner 
distributions proposed in those papers are not necessarily
easy-to-sample-from. 

Without any smoothness assumptions, \cite{biau2018some} 
provide large sample properties of the estimated 
distribution assuming that all the densities induced by 
the class of generators are dominated by a fixed known 
measure on a Borel subset of $\mathbb{R}^D$. When the 
admissible discriminators are neural networks with a 
given architecture, \cite{biau2020some} obtained the 
parametric rate $n^{-1/2}$.

To our knowledge, \citet{luise2020generalization} is 
the only work which establishes statistical guarantees
under the assumption that the data generating process 
is a smooth transformation of a low-dimensional 
latent distribution. Two key differences
with our work is that \citet{luise2020generalization} 
measure quality of sampling through the Sinkhorn 
divergence (while we consider IPMs) and consider 
smoothness larger than $d/2$. The latter leads to
parametric rates of convergence $n^{-1/2}$. Note also 
that the Sinkhorn divergence, introduced as a compelling 
computational alternative to the Wasserstein distance 
\citep{cuturi2013sinkhorn}, does not admit a 
straightforward interpretation as a sampling error. 

In this work, we assess the impact of the smoothness of 
the data generating process and the low-dimensionality of 
the latent space on the rates of convergence. The rates 
in the literature either depend on the ambient dimension, 
which can not explain the effectiveness of GANs, or assume 
strong smoothness assumption leading to parametric rate. 
This prevents a fine-grained analysis of the interplay 
between dimensions and smoothness. In this work we obtain 
rates which, in terms of dimension, depend only on the 
intrinsic dimension $d$ of the data and on the smoothness 
of the data generating process and the admissible 
discriminators.



\section{Problem statement}\label{sec:3}

We are given $n$ points $\bs X_1, \dots, \bs X_n$ in 
$\mathbb R^D$, that we assume drawn independently from 
an unknown joint probability distribution $P_{\textup{obs}} 
^{(n)}$. We will make the hypothesis that the data points
lie---up to a small noise---on a $d$-dimensional smooth 
manifold $\mathcal M$ with an intrinsic dimension $d$ much smaller 
than the ambient dimension $D$. More precisely, we assume
that the $\bs X_i$'s are perturbed versions of $n$ independent
copies of a point randomly sampled from a distribution
$P^*$ supported on the smooth manifold $\mathcal M$. The goal of generative 
modeling is to design a smooth function
\begin{align}
    g:\mathbb [0,1]^d \to [0,1]^D
\end{align}
such that the image of the uniform distribution $\mathcal 
U_d :=\mathcal U([0,1]^d)$ by $g$ is close to the target 
distribution $P^*$. Of course, this framework requires 
to make precise what is meant by ``smoothness'' of the 
function $g$ and how the closeness of two distributions 
is measured. Since the goal of the present work is to gain 
a better theoretical understanding of the problem of generative 
modeling, we assume that the "intrinsic dimension" $d$ 
is known. 

The following condition will be assumed to be true throughout 
this work, where $\sigma\ge 0$ and $\varepsilon\in[0,1]$ are 
fixed yet possibly unknown constants.

\noindent
\hypertarget{AssA}{\textbf{Assumption A}:}
There exists a  mapping $g^* : [0, 1]^d \rightarrow [0,1]^D$ 
(with $d \ll D$), as well as random vectors $\bs U_1,\ldots,
\bs U_n\in\mathbb R^d$ and  $\bs\xi_1,\ldots,\bs\xi_n\in
\mathbb R^D$ such that 
\vspace{-12pt}
\begin{itemize}\itemsep=1pt
    \item $\bs U_i$ are iid uniformly distributed in the 
    hypercube $[0, 1]^d$ (denoted by $\bs U_i\stackrel{\textup{\rm 
    iid}}{\sim}\mathcal U_d$),
    \item $\max_{i=1,\ldots,n}\mathbf E[\|\bs\xi_i\|_2]\le \sigma$ 
    for some $\sigma <\infty$,
    \item For some $\mathcal I\subset\{1,\ldots,n\}$ of cardinality at least 
    $(1-\varepsilon)n$, we have $\bs X_i = g^*(\bs U_i) + \bs\xi_i$
    for every $i\in \mathcal I$.
\end{itemize}

The parameters $\sigma$ and $\varepsilon$, referred to as the noise
magnitude and the rate of contamination, are unknown but assumed to
be small. The subset $\mathcal I$ in the last item of the assumption 
is the set of inliers. \hyperlink{AssA}{\bf Assumption A} means that 
up to some noise, the inliers are drawn from the uniform distribution 
on the hyper-cube and pushed-forward by $g^*$. The setting considered
here is \emph{adversarial}: the set of inliers and the values of the outliers
$\{\bs X_i:i\not\in\mathcal I\}$ may depend on all the random variables
$\bs U_i,\bs X_i,\bs\xi_i$. Furthermore,  $\bs U_i$ and $\bs \xi_i$ are not
necessarily independent.

\begin{figure}
\centerline{\includegraphics[width=0.5\textwidth]{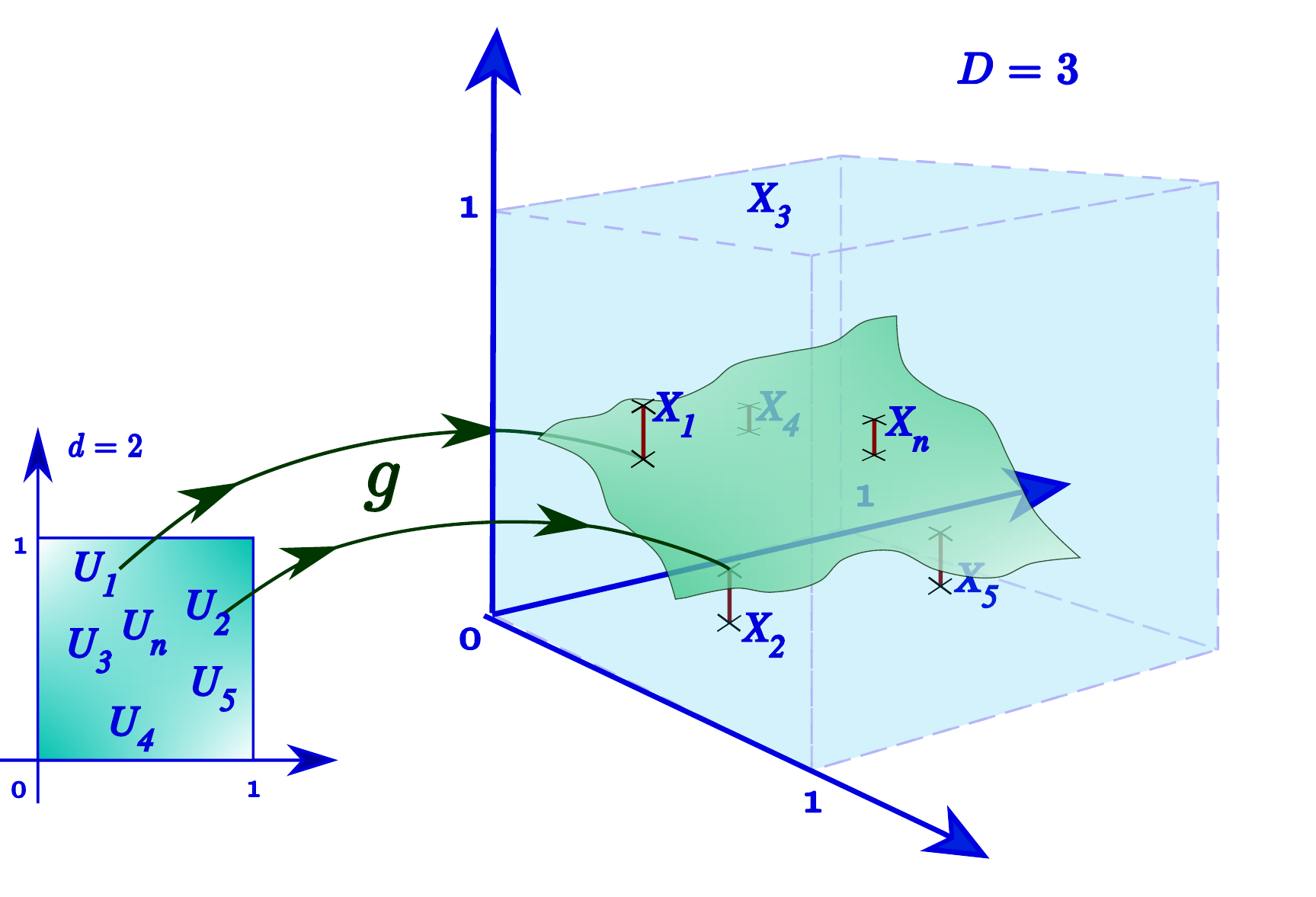}}
\label{fig:1}
\caption{An illustration of {\bf Assumption A}. Most 
$\bs X_i$'s are
close to the manifold defined as the image of $[0,1]^d$ by the
smooth map $g$. A small fraction of the $\bs X_i$'s (such as $\bs X_3$ in 
this figure) might be at a large distance from $g([0,1]^d)$.}
\end{figure}
In what follows, we set $P^* = g^*\sharp\,\mathcal{U}_d$ and 
call it the oracle generator. Let $\sfd$ be a pseudo-metric 
on the space of all probability measures on $\mathbb R^D$. 
Most relevant examples in the present context are IPMs, but 
one could also consider the Wasserstein $q$-distances with $q\geq1$, 
the Hellinger distance, the maximum mean discrepancy and so 
on. For every candidate generator $g$---a measurable mapping 
from $[0,1]^d$ to $\mathbb R^D$---we define the risk
\begin{align}\label{risk}
    R_{\sfd, P^*}(g) \coloneqq \sfd\big(g\sharp\,\mathcal U_d,P^*\big). 
\end{align}
Our goal is to find a mapping
\begin{align}
    \hat{G} : (\mathbb R^D)^n&  \to  \mathcal{G}\\
    (\bs X_1, \dots, \bs X_n)& \mapsto \hat{g}_n,
\end{align}
such that  $R_{\sfd, P^*}(\hat{g}_n)$ is as small as possible. 
Note here that $R_{\sfd, P^*}(\hat{g}_n)$ is a random 
variable, since $\hat g_n$ is random. Let $\mathcal{G}$ 
be a set of smooth (at least Lipschitz continuous) functions from 
$[0,1]^d$ to $\mathbb R^D$. We define the generator 
minimizing the empirical risk, hereafter referred to as 
the ERM, by
\begin{align}\label{ERM}
    \hat g_{n,\mathcal G}^{\textup{ERM}} \in \text{arg} 
    \min_{g\in\mathcal G} \sfd\big(g{\sharp}\,\mathcal U_d,
    \hat P_n\big).\tag{ERM}
\end{align}
We assume that the minimum is attained. Our results extend 
easily to the case in which it is not attained but adds 
some unnecessary technicalities. Our main result, presented 
in the next section, provides an upper bound on the risk 
\eqref{risk} of the \ref{ERM}. 

To enforce requirement \hyperlink{R2}{\bf R2}, 
we consider distances on the space of probability 
distributions that can be expressed as integral probability 
metrics for a class $\mathcal F$ of real-valued functions defined on 
$[0,1]^D$. More precisely, we define an integral probability 
metrics (IPM) for $\mathcal F$ as follows:
\begin{align}\label{IPM}
    \sfd_{\mathcal F}(P,Q) = \sup_{f\in \mathcal F} \lvert\mathbf 
    E_{P}[f(\bs X)] - \mathbf E_{Q}[f(\bs X)]\rvert.
\end{align}
Classical examples are the total variation and the Wasserstein-1 
distances, corresponding respectively to $\mathcal F = 
\{f: \sup_{x} |f(x)| \le 1\}$ and $\mathcal F = \{f: |f(x)-f(y)| 
\le \|x-y\| \text{ for all } x,y\}$.

\section{Warming up: guarantees in the noiseless setting for \texorpdfstring{${\mathsf W}_1$}{W1}}\label{sec:4}

Let us first consider the noiseless and uncontaminated setting 
$\sigma = \varepsilon = 0$, corresponding to $P_{\textup{obs}} 
= (P^*)^{\otimes n}$. To convey the main ideas of this 
work without diving into technicalities, we first consider 
the case of the Wasserstein ${\mathsf W}_1$-distance.
Using arguments that are now standard in learning theory, we 
get\footnote{See \citep[Lemma 1]{liang2018well} for a similar result.}
\begin{align}\label{B-V}
    R_{\sfd, P^*}(\hat g_{n,\mathcal G}^{\textup{ERM}}) \leq 
    \inf_{g\in\mathcal G} \sfd\big(g{\sharp}\,
    \mathcal U_d,P^*\big)+2\,\sfd\big(\hat P_n,P^*\big).
\end{align}
This inequality holds for any pseudo-metric $d$. It follows 
from the following chain of inequalities:
\begin{align}
    R_{\sfd, P^*}(\hat g_{n,\mathcal G}^{\textup{ ERM}}) &= 
    \sfd\big(\hat g_{n,\mathcal G}^{\textup{ERM}}\sharp\,
    \mathcal U_d,P^*\big)\\
    &\le \sfd\big(\hat g_{n,\mathcal G}^{\textup{ERM}}\sharp\,
    \mathcal U_d,\hat P_n\big)+\sfd\big(\hat P_n,P^*\big)\\
    &\le \inf_{g\in\mathcal G} \sfd\big(g\sharp\,\mathcal U_d,
    \hat P_n\big)+\sfd\big(\hat P_n,P^*\big)\\
    &\le \inf_{g\in\mathcal G} \sfd\big(g\sharp\,\mathcal U_d,
    P^*\big)+2\,\sfd\big(\hat P_n,P^*\big).
\end{align}
Note that if we replace in \eqref{ERM} the empirical 
distribution $\hat P_n$ by another estimator $\tilde P_n$
of $P^*$, then \eqref{B-V} continues to be true with 
$\tilde P_n$ instead of $\hat P_n$ in the right hand side.

The inequality \eqref{B-V} provides an upper bound 
on the risk that is composed of the approximation error
$\inf_{g\in\mathcal G} \sfd\big(g\sharp\,\mathcal U_d,P^*\big)$ 
and the stochastic error $2\,\sfd\big(\hat P_n,P^*\big)$. 
While the former is unavoidable, it is not clear how 
tight the latter is. In particular, the fact that the 
term $2\,\sfd\big(\hat P_n,P^*\big)$ measures the distance 
between the unknown distribution $P^*$ and an approximation 
of it that does not take into account the specific 
structure of $P^*$ suggests that it might be possible 
to get a better upper bound. 

This being said, we stick here to inequality 
\eqref{B-V} and devote the rest of this paper to 
establishing upper bounds on the stochastic error. To
this end, we take advantage of the interplay between
the assumptions on $P_X$ and $P^*$ on the one hand, and 
the set $\mathcal F$ defining the IPM $\sfd= \sfd_{\mathcal F}$
on the other hand. In the case when both the mapping
$g^*$ underlying $P^*$ and the elements of $\mathcal F$
are Lipschitz, we get the following result.

\begin{theorem}\label{thm:W1}
Let \hyperlink{AssA}{Assumption A} be fulfilled with $\sigma 
= \varepsilon = 0$ and $g^*\in \textup{Lip}_L([0,1]^d)$ for some $L>0$. Let $\sfd=\mathsf W_1
$ and set $\hat g_n=\hat g_{n,\mathcal G
}^{\textup{ ERM}}$. Then, for some universal constant 
$c>0$,
\begin{align}\label{riskbound1}
    \mathbf E[R_{{\mathsf W}_1, P^*}(\widehat{g}_n)] \leq \inf_{g \in 
    \mathcal{G}} R_{{\mathsf W}_1, P^*}(g) + \frac{c L 
    \sqrt{d}}{n^{1/d}\wedge n^{1/2}}\big(1+\mathds 1_{d=2}
    \log n\big).
\end{align}
\end{theorem}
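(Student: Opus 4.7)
The plan is to combine the bias--variance type inequality \eqref{B-V} with a pushforward reduction to the latent cube. Taking expectation in \eqref{B-V} with $\sfd = \mathsf W_1$ gives
\begin{align}
    \mathbf E[R_{\mathsf W_1,P^*}(\hat g_n)]\le \inf_{g\in\mathcal G} R_{\mathsf W_1,P^*}(g) + 2\,\mathbf E\big[\mathsf W_1(\hat P_n,P^*)\big],
\end{align}
so the entire task reduces to controlling $\mathbf E[\mathsf W_1(\hat P_n,P^*)]$.

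Since $\sigma=\varepsilon=0$, Assumption A yields $\bs X_i = g^*(\bs U_i)$ for all $i$, hence $\hat P_n = g^*\sharp\, \hat Q_n$ where $\hat Q_n = \tfrac{1}{n}\sum_{i=1}^n \delta_{\bs U_i}$ is the empirical measure on $[0,1]^d$ associated with the unobserved latent sample. A standard consequence of the Kantorovich--Rubinstein duality is that pushing forward by an $L$-Lipschitz map contracts $\mathsf W_1$ by at most a factor $L$: indeed, for any $f\in\textup{Lip}_1(\mathbb R^D)$, the composition $f\circ g^*$ belongs to $\textup{Lip}_L([0,1]^d)$, so testing with $f$ against $g^*\sharp\hat Q_n - g^*\sharp\mathcal U_d$ amounts to testing with $L^{-1}(f\circ g^*)\in\textup{Lip}_1$ against $\hat Q_n - \mathcal U_d$. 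This yields
\begin{align}
    \mathsf W_1(\hat P_n, P^*)\;=\;\mathsf W_1\big(g^*\sharp\hat Q_n,\,g^*\sharp\mathcal U_d\big)\;\le\; L\,\mathsf W_1(\hat Q_n, \mathcal U_d).
\end{align}

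The final ingredient is the classical rate of convergence of the empirical measure of $n$ iid uniform samples on $[0,1]^d$ in $\mathsf W_1$: one has
\begin{align}
    \mathbf E\big[\mathsf W_1(\hat Q_n,\mathcal U_d)\big]\;\le\;\frac{c'\sqrt d}{n^{1/d}\wedge n^{1/2}}\,\big(1+\mathds 1_{d=2}\log n\big),
\end{align}
for a universal constant $c'$. This is the Fournier--Guillin bound (with the $d=2$ logarithmic factor, the $d=1$ parametric rate, and the $n^{-1/d}$ rate for $d\ge 3$); the $\sqrt d$ prefactor reflects the diameter of the unit cube of $\mathbb R^d$. Combining the three displays above gives the desired inequality with $c = 2c'$.

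The argument is short and the main conceptual point is the Lipschitz pushforward step, which is precisely what transfers the estimation problem from the high-dimensional ambient space to the $d$-dimensional latent cube and thereby explains the appearance of the intrinsic dimension $d$ (rather than $D$) in the exponent. The only non-trivial black box is the Fournier--Guillin rate; everything else is a direct application of \eqref{B-V} and the duality characterisation of $\mathsf W_1$.
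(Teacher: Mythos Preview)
Your proof is correct and follows essentially the same route as the paper: apply \eqref{B-V}, reduce $\mathsf W_1(\hat P_n,P^*)$ to $L\,\mathsf W_1(\hat Q_n,\mathcal U_d)$ via the Lipschitz pushforward/composition argument, and invoke a known rate for the empirical measure on $[0,1]^d$. The only cosmetic difference is that the paper spells out the composition step through the class $\mathcal H_L=\{f\circ g^*:f\in\textup{Lip}_1\}\subset\textup{Lip}_L$ and cites \citet[Prop.~1]{niles2019estimation} (chosen precisely for the explicit $\sqrt d$ prefactor) rather than Fournier--Guillin.
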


The full proof of this result being postponed to Section~\ref{proof:thm1},
we provide here a sketch of it. In view of \eqref{B-V}, it suffices
to upper bound $\sfd(\hat P_n,P)={\mathsf W}_1(\hat P_n,P^*)$. 
Since $P^*$ and $\hat P_n$ are the 
pushforward measures of $\mathcal U_d$ and its empirical counterpart
by the same Lipschitz mapping, and the composition of two Lipschitz
mappings is still Lipschitz, we can upper bound ${\mathsf W}_1(\hat P_n,P^*)$
by $L{\mathsf W}_1(\hat P_{U,n}, \mathcal U_d)$. Here, $\hat P_{U,n}$ is the
empirical distribution of $\bs U_1,\ldots,\bs U_n$ independently 
sampled from $\mathcal U_d$. It is known that, for the Wasserstein-1 
distance, there is a universal constant $c>0$ such that $\mathbf{E} 
[{\mathsf W}_1(\hat P_{U,n}, \mathcal U_d)]$ is upper bounded by the
second summand of the right hand side of \eqref{riskbound1}; this fact
has been established in the seminal paper \citet{dudley1969speed} and
later refined and extended by many authors; see \citet{weed2019sharp,singh2018minimax,lei2020convergence} and 
references therein. The version we use here (with an explicit 
dependence of the constant on the dimension) can be
found in \citet[Prop. 1]{niles2019estimation}. This
completes the proof. 

Some remarks are in order. First, the rate of convergence to zero
of the stochastic term, when the sample size goes to infinity, is
characterized by the intrinsic dimension only.  This rate, $n^{-1/d}$,
is much smaller than the naive rate $n^{-1/D}$ provided that the
intrinsic dimension is small as compared to $D$. To the best of our
knowledge, despite the embarrassing simplicity of this result, this
is the first time that this phenomenon is highlighted in the context
of generative modeling. 

The second remark concerns the fact that the choice of the set
$\mathcal G$ in \eqref{ERM} impacts only the first term, the 
approximation error, in the risk bound given by \eqref{riskbound1}. 
This indicates that inequality \eqref{riskbound1} might not be 
tight when $\mathcal G$ is a very narrow set. On the positive 
side, this bound implies that the set $\mathcal G$ can be chosen 
very large, as long as feature \hyperlink{R1}{\bf R1} holds and
optimisation problem \eqref{ERM} is computationally tractable. 
Finally, one can wonder whether the assumption that $g^*$ is 
Lipschitz is realistic in some applications. We believe that it is. 
Indeed, the generator learned by GAN is a Lipschitz function of 
the input \citep{seddik2020random} and leads to 
qualitatively good results. Therefore, it makes perfect sense 
to assume that $g^*$ is Lipschitz.

\section{Main result in the noisy setting for smoothness classes}
\label{sec:5}

The rate of convergence obtained in the previous section might be
overly pessimistic. Indeed, the Wasserstein distance ${\mathsf W}_1$
might be very weak for many applications: it may be sufficient to take 
as $\mathcal F$ a set which is much smaller than that of the Lipschitz
functions. In particular, one can consider the case where $\mathcal F$
is a smoothness class with a degree of smoothness strictly larger than
one. The main result stated below considers this setting and answers 
the following three questions:
\vspace{-8pt}
\begin{itemize}[leftmargin=0.7in]\itemsep=2pt
    \item[{\bf [\hypertarget{Q1}{Q1}]}]\it Can we take advantage 
    of the further smoothness of $g^*$ and that of the functions
    in $\mathcal F$ for improving the risk bound \eqref{riskbound1}?

    \item[{\bf [\hypertarget{Q2}{Q2}]}]\it How does the noise magnitude
    $\sigma$ impact the risk?
    
    \item[{\bf [\hypertarget{Q3}{Q3}]}]\it Can we get meaningful risk
    bounds if some data points $\bs X_i$ are corrupted?
\end{itemize}
\vspace{-8pt}
To answer these questions, we consider the case of smoothness classes 
containing all the functions with bounded partial derivatives up to a 
given order.  Let $\mathcal{X}\subset \mathbb R^D$ be some compact set, 
which will be chosen to be $[0,1]^D$ later on in this section. In what 
follows, for every positive integer $\alpha$,  $C^\alpha(\mathcal X,
\mathbb R)$ denotes the set of all $\alpha$-times continuously differentiable 
functions. In addition, for a multi-index $\bs k\in \mathbb N^D$, we 
write $\texttt{D}^{\bs k} f$ for the $\bs k$-th order differential of $f$. 
Define the $\alpha$-smoothness class $\mathcal{W}^{\alpha}(\mathcal{X} ; 
r)$ over $\mathcal{X}$ with radius $L > 0$ by
\begin{align}
    \mathcal{W}^{\alpha}(\mathcal{X} ; L) \coloneqq \bigg\{ 
    f \in C^{\alpha}(\mathcal{X}, \mathbb{R}) : \max_{\lvert \bs k 
    \rvert \leq \alpha}\ \lVert \texttt{D}^{\bs k} f \rVert_\infty 
    \leq L \bigg\}.
\end{align}
Clearly, $\mathcal{W}^{1}(\mathcal{X} ; L) $ is included in 
the set $\text{Lip}_L(\mathcal X)$ of Lipschitz-continuous functions.
Furthermore, one can check that
$\mathcal{W}^{1}(\mathcal{X} ; L) $ is dense in 
$\text{Lip}_L(\mathcal X)$.
\begin{theorem}\label{thm:noisy}
Let~\hyperlink{AssA}{Assumption A} hold  
and let the coordinates $g_j^*$ of $g^*$ belong to $\mathcal
W^{\alpha}([0,1]^d,L)$ for some $L\ge 1$. Then, if
$\mathcal{F} = \mathcal W^{\alpha}([0,1]^D,1)$ in the definition of 
the IPM, we have
\begin{align}\label{riskbound2}
    \mathbf E[R_{\sfd_\mathcal{F}, P^*}(\hat{g}_{n,\mathcal
    G}^{\textup{ERM}})] \leq \inf_{g \in \mathcal{G}} 
    R_{\sfd_\mathcal{F}, P^*}(g) + L(\sigma + 2\varepsilon) + 
    \frac{c L^{\alpha}}{n^{\alpha/d}\wedge n^{1/2}} 
    \big(1 + \mathds 1_{d =2\alpha}\log n \big).
\end{align}
where $c$ is a constant which depends only on $\alpha, d, D$.
\end{theorem}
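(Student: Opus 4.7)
The plan is to apply the bias--variance-style decomposition \eqref{B-V} to $\sfd=\sfd_{\F}$, which reduces the theorem to controlling in expectation the stochastic term $\sfd_{\F}(\hat P_n, P^*)$. Introducing the auxiliary clean empirical measure $\bar P_n := g^*\sharp\hat P_{U,n} = \tfrac1n\sum_{i=1}^n \delta_{g^*(\bs U_i)}$, where $\hat P_{U,n}$ is the empirical distribution of the latent iid sample $\bs U_1,\ldots,\bs U_n\sim \U_d$, the triangle inequality splits this term into a noise/contamination contribution $\sfd_{\F}(\hat P_n, \bar P_n)$ and a pure sampling contribution $\sfd_{\F}(\bar P_n, P^*)$ that lives on the latent cube $[0,1]^d$.

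For the noise/contamination piece, every $f\in\F=\W^\alpha([0,1]^D, 1)$ satisfies $\|f\|_\infty\leq 1$ and is Lipschitz with constant $\leq\sqrt D$ since $\|\partial_j f\|_\infty\leq 1$ for each coordinate. Splitting the index set into inliers $\mathcal I$ and outliers $\mathcal I^c$, the inlier contribution to $\sfd_{\F}(\hat P_n,\bar P_n)$ is bounded by $(\sqrt D/n)\sum_{i\in\mathcal I}\|\bs\xi_i\|_2$, whose expectation is at most $\sqrt D\,\sigma$ by Assumption~A, while the outlier contribution is bounded by $2|\mathcal I^c|/n\leq 2\varepsilon$. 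After absorbing the universal constants and the factor $\sqrt D$ into the normalisation (using $L\geq 1$), this yields the $L(\sigma + 2\varepsilon)$ term of \eqref{riskbound2}.

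For the sampling piece, the change-of-variable identity $\E_{\bar P_n}[f] = \E_{\hat P_{U,n}}[f\circ g^*]$ together with its population analogue $\E_{P^*}[f] = \E_{\U_d}[f\circ g^*]$ gives $\sfd_{\F}(\bar P_n, P^*) \leq \sfd_{\tilde\F}(\hat P_{U,n}, \U_d)$, where $\tilde\F := \{f\circ g^* : f\in\F\}$. A multivariate Fa\`a di Bruno computation then shows that $\tilde\F \subset \W^\alpha([0,1]^d, c_0 L^\alpha)$: each partial derivative of $f\circ g^*$ of order $k\leq\alpha$ expands as a finite sum (with combinatorial coefficients depending only on $\alpha, d, D$) of products of a single derivative of $f$ of order at most $k$ (bounded by $1$) and at most $k$ derivatives of coordinates of $g^*$ whose orders sum to $k$ (each bounded by $L$), giving an overall bound of $c_0 L^\alpha$. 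It then suffices to invoke the classical empirical-process estimate
\begin{align*}
\E\bigl[\sfd_{\W^\alpha([0,1]^d, M)}(\hat P_{U,n}, \U_d)\bigr] \leq \frac{c_1 M}{n^{\alpha/d}\wedge n^{1/2}}\bigl(1+\mathds 1_{d=2\alpha}\log n\bigr),
\end{align*}
which follows by Dudley chaining from the metric-entropy bound $\log N(\epsilon, \W^\alpha([0,1]^d, 1), \|\cdot\|_\infty) \leq c_2\epsilon^{-d/\alpha}$ (with the $\alpha>d/2$ regime being Donsker). Applying it with $M = c_0 L^\alpha$ yields the last term of \eqref{riskbound2}, and combining the three contributions through \eqref{B-V} completes the proof.

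The main technical obstacle is the Fa\`a di Bruno bookkeeping in the sampling step, which is combinatorially heavy but essentially mechanical and cleanly produces the $L^\alpha$ dependence in the radius of $\tilde\F$ together with an explicit constant depending only on $\alpha, d, D$. The only other subtle point is the boundary case $d=2\alpha$ of the empirical-process bound, which sits exactly at the threshold between the nonparametric $n^{-\alpha/d}$ rate and the parametric $n^{-1/2}$ rate, and for which the $\log n$ factor comes from the borderline behaviour of Dudley's entropy integral.
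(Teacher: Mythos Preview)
Your proof is correct and follows essentially the same route as the paper: the reduction via \eqref{B-V}, the split into a noise/contamination term (bounded using the Lipschitz and sup-norm properties of $\F$, exactly as in the paper's Lemma~7.1) and a noise-free empirical-process term on $[0,1]^d$, the Fa\`a~di~Bruno argument showing $\F\circ g^*\subset\W^\alpha([0,1]^d,CL^\alpha)$ (the paper's Lemma~7.2), and finally the smoothness-class empirical-process bound (which the paper cites from \cite{schreuder2020bounding} rather than rederiving via Dudley chaining). The only cosmetic difference is that you phrase the intermediate object as the clean empirical measure $\bar P_n=g^*\sharp\hat P_{U,n}$, whereas the paper works directly with the class $\F\circ g^*$; these are two ways of writing the same identity.
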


Let us note that this theorem answers the three questions 
\hyperlink{Q1}{\bf Q1}-\hyperlink{Q3}{\bf Q3}. In particular, 
it shows that
if the oracle generator map $g^*$ is $\alpha$-smooth with
$\alpha\le d/2$, and the test function defining the distance
$\sfd_{\mathcal F}$ are $\alpha$-smooth as well, then the
last term of the risk bound of the generator minimizing the 
empirical risk is of order $n^{-\alpha/d}$. This rate improves 
with increasing $\alpha$ and reaches the optimal rate 
$n^{-1/2}$, up to a log factor, when $\alpha = d/2$. 
It also follows from \eqref{riskbound2} that the risk of
the generator $\hat{g}_{n,\mathcal G}^{\textup{ERM}}$ 
decreases linearly fast in the noise magnitude $\sigma$
and the contamination rate $\varepsilon$, when these
parameters go to zero. 

As mentioned earlier, \eqref{riskbound2} is a consequence of
\eqref{B-V} and we do not know whether the latter is tight. 
However, we can show that the right hand side of 
\eqref{riskbound2} is a tight upper bound on the right hand
side of \eqref{riskbound2}. More precisely, as stated in the 
next result, the dependence on $\sigma$ and $\varepsilon$ is 
tight, while the dependence on $n$ is tight when $\alpha=1$ 
or $\alpha>d/2$.  

\begin{theorem}\label{thm:3}
Let $\mathcal P_{n,D}(d,\sigma,\varepsilon,g^*)$ be the set of
all distributions of $n$ points $(\bs X_1,\ldots,\bs X_n)$ 
in $\mathbb R^D$ satisfying \hyperlink{AssA}{Assumption A}. 
Let $\mathcal G^*$ be a set of functions $g:[0,1]^d\to [0,1]^D$ 
containing the linear functions. 
If $\sigma\le 1/2$ and $\mathcal F$ contains 
the projection onto the first axis $\bs x\in[0,1]^D \mapsto 
x_1\in\mathbb R$, then there is a universal constant 
$c_1>0$ such that
\begin{align}
    \sup_{g^*\in\mathcal G^*}\sup_{P^{(n)}\in \mathcal P_{n,D}(d, 
    \sigma,\varepsilon,g^*)}  \mathbf E_{P^{(n)}}[
    \mathsf{d}_{\mathcal F}(\hat P_n,P^*)] \ge c_1\Big(\sigma + 
    \varepsilon + \frac1{n^{1/2}}\Big).
\end{align}
If, in addition, $\mathcal F$ contains the set of all $1$-Lipschitz
functions, then
\begin{align}
    \sup_{g^*\in\mathcal G^*}
    \sup_{P^{(n)}\in \mathcal P_{n,D}(d,\sigma,\varepsilon)}  
    \mathbf E_{\mathcal P^{(n)}}[
    \mathsf{d}_{\mathcal F}(\hat P_n,P^*)] \ge c_1(\sigma + 
    \varepsilon) + \frac{c_d \big(1 + \mathds 1_{d =2}
    \log n \big)}{n^{1/d}\wedge n^{1/2}},
\end{align}
where $c_1$ is a universal constant and $c_d$ is a constant
depending on $d$. 
\end{theorem}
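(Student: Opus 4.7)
Being a lower bound on a supremum, the statement is proved by exhibiting, for each of the summands $\sigma$, $\varepsilon$ and $n^{-1/2}$ (and later $n^{-1/d}$ up to the $d=2$ logarithm), one explicit pair $(g^*,P^{(n)})$ with $g^*\in\mathcal G^*$ linear and $P^{(n)}\in\mathcal P_{n,D}(d,\sigma,\varepsilon,g^*)$ that forces $\mathbf E[\sfd_{\mathcal F}(\hat P_n,P^*)]$ to be at least that summand; the elementary $\max(a,b,c)\ge(a+b+c)/3$ then produces the first display, and a fourth construction delivers the second. The workhorse for the first display is
\[
\sfd_{\mathcal F}(\hat P_n,P^*)\ \ge\ \bigg|\frac1n\sum_{i=1}^n f(\bs X_i)-\mathbf E_{P^*}[f]\bigg|\qquad(f\in\mathcal F),
\]
specialised to the coordinate projection $f(\bs x)=x_1$, which lies in $\mathcal F$ by hypothesis.

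First I would treat the noise term by taking $g^*\equiv\bs 0$, $\bs\xi_i\equiv\sigma\bs e_1$ (deterministic, hence $\mathbf E\|\bs\xi_i\|_2=\sigma$) and $\mathcal I=\{1,\dots,n\}$; then $\bs X_i=\sigma\bs e_1$ almost surely and the right-hand side above equals $\sigma$. For the contamination term I would keep $g^*\equiv\bs 0$ and $\bs\xi_i\equiv\bs 0$, but corrupt $\lfloor\varepsilon n\rfloor$ indices by setting $\bs X_i=\bs e_1$ on them; the inlier set still has at least $(1-\varepsilon)n$ elements, and the right-hand side equals $\lfloor\varepsilon n\rfloor/n\ge\varepsilon/2$ as soon as $n\varepsilon\ge2$, while the complementary regime $\varepsilon<2/n$ is automatically dominated by the $n^{-1/2}$ summand. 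For the fluctuation term I would set $g^*(\bs u)=(u_1,0,\dots,0)$ in the noiseless and uncontaminated case; the right-hand side becomes $|\bar U-1/2|$ where $\bar U=\tfrac1n\sum_i U_{i,1}$ averages $n$ independent $\mathcal U([0,1])$ variables, and a Paley--Zygmund argument on its second and fourth centered moments (of orders $1/n$ and $1/n^2$ respectively) yields $\mathbf E|\bar U-1/2|\ge c\,n^{-1/2}$ for a universal $c>0$.

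For the sharper Lipschitz lower bound only the fluctuation construction must be upgraded. I would pick the isometric linear embedding $g^*(\bs u)=(\bs u,\bs 0)\in[0,1]^D$ in the noiseless and uncontaminated setting, so that both $P^*=g^*\sharp\mathcal U_d$ and $\hat P_n=g^*\sharp\hat P_{U,n}$ are supported on the image of $g^*$. Because $g^*$ is an isometry onto its image, couplings push forward and pull back with identical transport cost, whence $\mathsf W_1(\hat P_n,P^*)=\mathsf W_1(\hat P_{U,n},\mathcal U_d)$. Combining Kantorovich--Rubinstein duality (which yields $\sfd_{\mathcal F}\ge\mathsf W_1$ when all $1$-Lipschitz functions belong to $\mathcal F$) with the classical matching minimax lower bound
\[
\mathbf E[\mathsf W_1(\hat P_{U,n},\mathcal U_d)]\ \ge\ c_d\,\frac{1+\mathds 1_{d=2}\log n}{n^{1/d}\wedge n^{1/2}}
\]
(\citet{dudley1969speed}; see also the references cited after Theorem~\ref{thm:W1}) closes the argument. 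The only non-trivial ingredient is this last empirical-Wasserstein lower bound on the uniform measure; the remaining difficulties are purely cosmetic rounding issues, dispatched by the $n\varepsilon\ge2$ dichotomy.
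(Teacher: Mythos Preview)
Your proof is correct and, in one respect, tidier than the paper's own. The paper handles the first display with a \emph{single} construction that mixes noise and contamination simultaneously: it takes $g^*(\bs x)=(2\bs x+1)/4$, adds uniform noise of magnitude $\sigma$ to the inliers, places all outliers at $(1,\ldots,1)^\top$, and then splits into the regimes $\sigma+\varepsilon\ge 2n^{-1/2}$ versus $\sigma+\varepsilon<2n^{-1/2}$, balancing in each case the bias $0.5(\sigma+\varepsilon)$ against the fluctuation of the sample mean. Your modular route---three separate pairs $(g^*,P^{(n)})$, one per summand, glued by $\max\ge$ average---avoids that case split entirely, and has the incidental virtue that every $g^*$ you use is genuinely \emph{linear}; the paper's $g^*(\bs x)=(2\bs x+1)/4$ is only affine, a small mismatch with the stated hypothesis on~$\mathcal G^*$. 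For the second display the two arguments coincide: reduce to $\mathsf W_1(\hat P_{U,n},\mathcal U_d)$ via an isometric embedding and invoke the classical empirical-Wasserstein lower bound (the paper simply cites \citet{dudley1969speed} without spelling out the embedding step, so your justification is a useful fill-in). Your handling of the rounding issue $\lfloor\varepsilon n\rfloor/n$ via the dichotomy $\varepsilon\gtrless 2/n$ is also sound, since in the small-$\varepsilon$ regime the term is absorbed by $n^{-1/2}$.
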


The proof of the theorem is postponed to Section~\ref{ssec:proof_thm3}. 
Note that it does not establish the tightness of the dependence 
of the bound in $n$ in the case of smoothness $\alpha\in(1,d/2)$.  
However, it is very likely that the rate is also optimal in this
case as well. 

To complete this section, we show that the dependence in $\varepsilon$ and $\sigma$ of the upper bound  \eqref{riskbound2} is tight.

\begin{theorem}\label{thm:risk_lower_bound}
Let $\mathcal P_{n,D}(d,\sigma,\varepsilon,g^*)$ be the set of
all distributions of $n$ points $(\bs X_1,\ldots,\bs X_n)$ 
in $\mathbb R^D$ satisfying \hyperlink{AssA}{Assumption A}. 
Let $\mathcal G^*$ be a set of functions $g:[0,1]^d\to [0,1]^D$ 
containing the affine functions. Assume that 
$\mathcal F$ is a set of functions $f:[0,1]^D\to \mathbb R$ bounded by\footnote{It can be checked that the same result holds if the 
functions $f$ satisfy $\max_x f(x) - \min_x f(x) \le L$.} $L$ 
and containing the projection onto the first axis $\bs x\in[0,1]^D 
\mapsto x_1\in\mathbb R$.  Then, if 
$\sigma\le 1/2$ and $n\ge (6/\varepsilon)\log(20 L/\varepsilon)$, 
we have
\begin{align}
    \inf_{\hat{g}_n}\ \sup_{g^*\in\mathcal G^*} \sup_{P^{(n)} \in \mathcal P_{n,D}(d,\sigma,\varepsilon,g^*)} \mathbf E[R_{\sfd_\mathcal{F}, P^*}(\hat{g}_{n})] \geq 0.1(\sigma + \varepsilon),
\end{align}
where the inf is taken over all possible generators
$\hat{g}_n$. 
\end{theorem}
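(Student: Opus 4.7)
The plan is to deploy two separate two-point Le Cam arguments, one calibrated to the noise parameter $\sigma$ and the other to the contamination rate $\varepsilon$, and then combine them through an elementary arithmetic inequality. In each test I pick two affine generators $g_0^*, g_1^* \in \mathcal G^*$ and two joint observation distributions $\tilde P_0^{(n)}, \tilde P_1^{(n)}$ satisfying \hyperlink{AssA}{Assumption A} whose total variation I can control, while ensuring $\sfd_\mathcal F(P_0^*, P_1^*)$ stays large. The triangle inequality for the pseudo-metric $\sfd_\mathcal F$, applied on the diagonal of a maximal coupling of $\tilde P_0^{(n)}$ and $\tilde P_1^{(n)}$, then yields $\sup_{g^*, P^{(n)}} \mathbf E[R(\hat g_n)] \ge \tfrac{1}{2}\bigl(1 - \mathrm{TV}(\tilde P_0^{(n)}, \tilde P_1^{(n)})\bigr)\, \sfd_\mathcal F(P_0^*, P_1^*)$.

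For the $\sigma$ part I take the constant maps $g_0^* \equiv \bs 0$ and $g_1^* \equiv 2\sigma \bs e_1$, both affine and mapping into $[0,1]^D$ since $\sigma \le 1/2$. The key idea is to pick noises supported on the two-point set $\{\bs 0, 2\sigma \bs e_1\}$: under $H_0$ let $\bs \xi_i$ be uniform on $\{\bs 0, 2\sigma \bs e_1\}$, under $H_1$ let it be uniform on $\{-2\sigma \bs e_1, \bs 0\}$. Then $\mathbf E\|\bs\xi_i\|_2 = \sigma$ and in both cases $\bs X_i$ is iid uniform on $\{\bs 0, 2\sigma \bs e_1\}$, so the two observation laws are identical and $\mathrm{TV} = 0$. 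Since the first-coordinate projection lies in $\mathcal F$ and separates the Diracs $P_0^* = \delta_{\bs 0}$ and $P_1^* = \delta_{2\sigma \bs e_1}$ by $2\sigma$, the Le Cam bound above yields $\sup \mathbf E[R(\hat g_n)] \ge \sigma$.

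For the $\varepsilon$ part I take the linear maps $g_h^*(\bs u) = (u_1/2 + h\varepsilon/4)\bs e_1$, so that $P_0^*$ and $P_1^*$ are uniform on translated intervals of length $1/2$ along the first axis with $\sfd_\mathcal F(P_0^*, P_1^*) \ge \varepsilon/4$ and $\mathrm{TV}(P_0^*, P_1^*) = \varepsilon/2$. Fixing an effective outlier rate $\eta = \varepsilon/2$ and using the Jordan decomposition of the signed measure $P_0^* - P_1^*$, I set $Q_h = \tfrac{1-\eta}{\eta}(P_{1-h}^* - P_h^*)_+ + R$ for a common positive measure $R$ of mass $\eta$, which makes each $Q_h$ a probability measure on $[0,1]^D$ and forces $(1-\eta)P_h^* + \eta Q_h$ to reduce to the common mixture $\mu = (1-\eta)\max(P_0^*, P_1^*) + \eta R$ for both $h$. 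I generate the data by sampling iid from $\mu$: for each $i$, attach a flag $Z_i \sim \mathrm{Bern}(\eta)$, draw $\bs X_i$ from $P_h^*$ when $Z_i = 0$ and from $Q_h$ when $Z_i = 1$, and independently draw $\bs U_i$ from the appropriate fiber of $g_h^*$ when $Z_i = 0$ and from $\mathcal U_d$ when $Z_i = 1$; this keeps $(\bs U_i)_i$ iid $\mathcal U_d$ and independent of the flags $(Z_i)_i$. Conditioning on $E := \{N := \sum_i Z_i \le \varepsilon n\}$ turns the construction into a bona fide element of $\mathcal P_{n,D}(d, \sigma, \varepsilon, g_h^*)$, and the multiplicative Chernoff bound gives $\Pr[N > \varepsilon n] \le e^{-n\varepsilon/6}$. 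The hypothesis $n \ge (6/\varepsilon)\log(20L/\varepsilon)$ is precisely what pushes this below $\varepsilon/(20L)$, so the conditioning inflates the total variation between the two observation laws by at most $\varepsilon/(10L) \le 1/10$ for $L \ge 1$; plugging back into the Le Cam bound yields $\sup \mathbf E[R(\hat g_n)] \ge (9/10)\cdot(\varepsilon/4)/2 = 9\varepsilon/80$.

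The main obstacle lies in this second construction, specifically in checking that conditioning on $E$ does not spoil the iid-uniformity of the latent $\bs U_i$'s; this reduces to the key independence $(\bs U_i)_i \perp (Z_i)_i$ built into the sampling scheme together with the fact that $E$ is determined by $(Z_i)_i$. With both lower bounds in hand, the theorem follows from the elementary inequality $\max(\sigma, 9\varepsilon/80) \ge 0.1(\sigma + \varepsilon)$: if $\sigma \ge \varepsilon/9$ then $\sigma \ge \tfrac{1}{10}(\sigma + \varepsilon)$, while otherwise $0.1(\sigma + \varepsilon) < \varepsilon/9 < 9\varepsilon/80$, and the supremum in the theorem dominates the maximum over any pair of instances.
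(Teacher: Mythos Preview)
Your proof is correct and follows the same overall strategy as the paper: two separate two-point (Le Cam) constructions, one for $\sigma$ and one for $\varepsilon$, combined at the end through an elementary $\max\!\ge$ weighted-sum inequality. The noise construction is essentially the paper's (both arrange for the two observation laws to coincide exactly; you use symmetric random noise where the paper uses deterministic shifts, which even buys you the slightly better constant $\sigma$ versus the paper's $\sigma/2$). For the contamination part the paper quotes an external reduction lemma from the adversarial model of Assumption~A to the Huber mixture model, after which the two mixtures $(1-\varepsilon)P_h^*+\varepsilon Q_h$ are made literally equal; you instead carry out that reduction by hand---building the indistinguishable mixtures abstractly via the Jordan decomposition of $P_0^*-P_1^*$, then conditioning on the Chernoff event $\{N\le\varepsilon n\}$ to force the outlier count within the budget, with the crucial observation that $(\bs U_i)_i\perp(Z_i)_i$ so the conditioning leaves the latent variables iid uniform. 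This makes your argument self-contained at the price of a little more bookkeeping; the resulting constants ($\sigma$ and $9\varepsilon/80$) are comparable to the paper's ($\sigma/2$ and roughly $\varepsilon/5$), and both suffice for the stated $0.1(\sigma+\varepsilon)$ bound.
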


The proof of this result is postponed to Section~\ref{ssec:7.5}. 
If we compare this lower bound with the upper bound 
of Theorem~\ref{thm:noisy}, we see that the linear dependence of the
expected risk on the parameters $\sigma$ and $\varepsilon$ is
optimal and cannot be improved. This is true for any generator,
meaning that the empirical risk minimizer is minimax rate-optimal
in terms of $\sigma$ and $\varepsilon$. We are currently working 
on establishing similar lower bounds showing the optimality in
terms of $n$ as well. 

\section{Conclusion and outlook}\label{sec:6}

In this work, we introduced a general and nonparametric framework 
for learning generative models. Given data in a possibly high-dimensional 
space, we learn their distribution in order to sample new data 
points that resemble the training ones, while not being identical 
to those. A key point in our work is to leverage the fact that 
the distribution of the training samples, up to some noise and
adversarial contamination, is supported by a low-dimensional 
smooth manifold. This allows us to alleviate the curse of 
dimensionality. Such an assumption is very reasonable as it 
reflects the structural properties of the training samples. 
For instance, the MNIST dataset \citep{lecun1998mnist} 
is composed of $28 \times 28$ pixels pictures of 
handwritten digits while the intrinsic dimension of the 
data is estimated to be around $14$ \citep{costa2004learning,
levina2005maximum}. 

We established risk bounds for the minimizer of the distance
between the empirical distribution and admissible generators, 
where an admissible generator is a smooth function  
pushing forward a low-dimensional uniform distribution into the 
high-dimensional sample space. We use Integral Probability 
Metrics for measuring the discrepancy between the target 
distribution and our estimate: These metrics, which include 
the total variation and the Wasserstein-1 distances, mimic the 
role of a discriminator which would try to discriminate between 
true samples and the simulated ones. 

By proving new bounds on the distance between such distributions 
and their empirical counterparts, we were able to derive 
nonasymptotic bounds for the regret of our empirical risk minimizer, 
with rates of convergence that only depend on the ambient dimension 
through fixed multiplicative constants. Our new bounds, which are 
of independent interest, leverage both the smoothness of the 
distribution of the samples and that of the functions in 
the IPM class. 

We were also able to take into account possible adversarial 
corruption of the training samples both by noise (\textit{e.g.}, blurry 
images) and by a small proportion of outliers (\textit{i.e.}, wrong 
samples in the training set), inducing some error terms that are 
shown to be unavoidable. To the best of our knowledge, this is the
first result assessing the influence of the noise and of the 
contamination on the error of generative modeling. This constitutes
an appealing complement to the recently obtained statistical 
guarantees \citep{biau2020some,luise2020generalization}.

As a route for future work, we believe that our regret bounds 
are not minimax optimal in all possible regimes (depending on 
the smoothness of the generators). Namely, it is not clear that 
fitting our generator to the empirical distribution $\hat P_n$ 
yields an optimal method, especially when the smoothness  
$\alpha$ is less than the half of the dimension $d$. 
It might be more judicious to fit the generator to a smoothed 
version of the empirical distribution $\hat P_n$.

\section{Proofs}\label{sec:7}

This section contains the proofs of the main results stated 
in previous sections. We start by providing the proof of
Theorem~\ref{thm:W1}. Then, the proof of Theorem~\ref{thm:noisy} is 
presented up to the proof of a technical lemma on the 
composition of smooth functions, postponed to 
Section~\ref{ssec:proof_comp}.

\subsection{Proof of \texorpdfstring{Theorem~\ref{thm:W1}}{Theorem 1}}\label{proof:thm1}

To ease notation, we write $\widehat{g}_n$ instead of 
$\widehat{g}_{n,\mathcal G}^{\textup{ERM}}$. In view 
of \eqref{B-V}, we have
\begin{align}
    R_{{\mathsf W}_1, P^*}(\widehat{g}_n) \leq \inf_{g \in 
    \mathcal{G}} {\mathsf W}_1(g\sharp\,
    \mathcal{U}_d, P^*) + 2{\mathsf W}_1(\hat{P}_n, P^*).
\end{align}
Using the variational formulation of the Wasserstein-1 
distance we write
\begin{align}
    {\mathsf W}_1(\hat{P}_n, P^*) &= \sup_{f \in \textup{Lip}_1([0,1]^D)} \Big|
    \frac{1}{n}\sum_{i=1}^n f(\bs X_i) - 
    \mathbf{E}_{\bs X\sim P^*} f(\bs X)  \Big|\\
    &= \sup_{f \in \textup{Lip}_1([0,1]^D)} \Big| \frac{1}{n} 
    \sum_{i=1}^n f\circ g^*(\bs U_i) - \mathbf{E}_{
    {\bs U}\sim \mathcal{U}_d} f\circ g^*(\bs U)  \Big|\\
    &= \sup_{h \in \mathcal{H}_L} \Big(\frac{1}{n}
    \sum_{i=1}^n h(\bs U_i) - \mathbf{E}_{{\bs U}\sim 
    \mathcal{U}_d} h(\bs U)  \Big)
\end{align}
where we define the class $\mathcal{H}_L = \{ h : [0, 1]^d 
\to \mathbb{R} : h = f \circ g^*, f \in \textup{Lip}_1([0,1]^D) \}$. 
Finally, taking the expectation and noting that $\mathcal{H}_L$ is a 
subset of the the $L$-Lipschitz functions on $[0, 1]^d$ with
values in $\mathbb{R}$, we get
\begin{align}
    \mathbf{E} [{\mathsf W}_1(\hat{P}_n, P^*)] &\le  \mathbf{E} \bigg[\sup_{h \in 
    \textup{Lip}_L([0,1]^d)} \Big( \frac{1}{n}\sum_{i=1}^n h(\bs U_i) 
    - \mathbf{E}_{{\bs U}\sim \mathcal{U}_d} h(\bs U)  \Big)\bigg]\\
    &\leq L\, \mathbf{E} [ {\mathsf W}_1(\hat{P}_{U,n}, \mathcal U_d) ]\\
    &\leq \frac{c L \sqrt{d}}{n^{1/d}\wedge n^{1/2}}
    \big(1+\mathds 1_{d=2}\log n\big),
\end{align}
with $c$ a universal constant. The last inequality follows from 
\citet[Proposition 1]{niles2019estimation}.

\subsection{Proof of \texorpdfstring{Theorem~\ref{thm:noisy}}{Theorem 2}}
In view of \eqref{B-V}, we need to establish an upper bound on the 
expected stochastic error
\begin{align}
    \textup{NoisyStochErr}_n = \mathbf E[\sfd_{\mathcal F}(P_n,P^*)] = 
    \mathbf E\bigg[\sup_{f\in \mathcal F} \bigg|\frac1n\sum_{i=1}^n 
    f(\bs X_i) - \mathbf E[f(g^*(\bs U))] \bigg|\bigg],
\end{align}
where $\bs U\sim \mathcal U_d$ and $\mathcal F = \mathcal W^{\alpha}([0,1]^D,1)$. The 
first step in the proof is a lemma showing the influence of the noise and the
corruption on the error $\textup{StochErr}_n$. 

\begin{lem}
If $P_X$ satisfies \hyperlink{AssA}{Assumption A} with 
$\varepsilon\in[0,1]$ and all the functions in $\mathcal F$ 
are bounded by a constant $L_{\mathcal F}$
and Lipschitz with constant $L_{\mathcal F}$, then 
\begin{align}
    \textup{NoisyStochErr}_n \leq L_{\mathcal F}\sigma + 2M_{\mathcal F}\varepsilon 
    + \textup{NoiseFreeStochErr}_n,
\end{align}
where 
\begin{align}
    \textup{NoiseFreeStochErr}_n  = \mathbf E\bigg[
    \sup_{f\in \mathcal F}\bigg|\frac1n\sum_{i=1}^n (f\circ g^*)(\bs U_i) 
    - \mathbf E[(f\circ g^*)(\bs U)] \bigg|\bigg],
\end{align}
with $\bs U, \bs U_1,\ldots,\bs U_n$ iid random vectors drawn from $\mathcal U_d$.
\end{lem}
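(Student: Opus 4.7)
The plan is to decompose the empirical average $\frac{1}{n}\sum_{i=1}^n f(\bs X_i)$ into a noise-free part built from the latent variables $\bs U_i$, plus two correction terms that account respectively for the additive noise on the inliers and for the presence of outliers.

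More precisely, since Assumption A provides $\bs U_1,\ldots,\bs U_n\stackrel{\textup{iid}}{\sim}\mathcal U_d$ for \emph{every} index $i$ (even for the outliers, for which the relation $\bs X_i = g^*(\bs U_i)+\bs\xi_i$ need not hold), I would insert and subtract $f(g^*(\bs U_i))$ index by index and write
\begin{align}
\frac{1}{n}\sum_{i=1}^n f(\bs X_i) - \mathbf E[f(g^*(\bs U))]
&= \Big(\tfrac{1}{n}\sum_{i=1}^n f(g^*(\bs U_i)) - \mathbf E[f(g^*(\bs U))]\Big) \\
&\quad + \tfrac{1}{n}\sum_{i\in\mathcal I}\big[f(g^*(\bs U_i)+\bs\xi_i) - f(g^*(\bs U_i))\big] \\
&\quad + \tfrac{1}{n}\sum_{i\notin \mathcal I}\big[f(\bs X_i) - f(g^*(\bs U_i))\big].
\end{align}
Taking the absolute value and the supremum over $f\in\mathcal F$ on both sides, the first line—supremized over $f$—is exactly the quantity whose expectation defines $\textup{NoiseFreeStochErr}_n$. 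The second line can be controlled using the Lipschitz assumption: $|f(g^*(\bs U_i)+\bs\xi_i) - f(g^*(\bs U_i))|\le L_{\mathcal F}\|\bs\xi_i\|_2$. The third line is handled by the uniform bound $|f|\le L_{\mathcal F}$, giving $|f(\bs X_i) - f(g^*(\bs U_i))|\le 2L_{\mathcal F}$, and there are at most $\varepsilon n$ such terms by definition of $\mathcal I$.

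Putting these three bounds together yields, before taking expectation,
\begin{align}
\sup_{f\in\mathcal F}\bigg|\tfrac{1}{n}\sum_{i=1}^n f(\bs X_i) - \mathbf E[f(g^*(\bs U))]\bigg|
&\le \sup_{f\in\mathcal F}\bigg|\tfrac{1}{n}\sum_{i=1}^n f(g^*(\bs U_i)) - \mathbf E[f(g^*(\bs U))]\bigg| \\
&\quad + \frac{L_{\mathcal F}}{n}\sum_{i\in\mathcal I}\|\bs\xi_i\|_2 + 2L_{\mathcal F}\varepsilon.
\end{align}
Taking expectations and using the moment bound $\max_i \mathbf E[\|\bs\xi_i\|_2]\le \sigma$ (which handles the noise term, since $|\mathcal I|\le n$) immediately gives the claimed inequality, identifying $M_{\mathcal F}$ with $L_{\mathcal F}$.

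There is no serious obstacle here: the argument is pure bookkeeping once the decomposition is written down correctly. The only subtle point I would emphasize is that the insertion of $f(g^*(\bs U_i))$ for outlier indices $i\notin\mathcal I$ is legitimate precisely because Assumption A postulates the latent variables $\bs U_i$ to be iid $\mathcal U_d$ for all $i$, independently of whether the corresponding $\bs X_i$ is an inlier; this is what allows us to bound the supremum by a supremum over the noise-free empirical process on $g^*(\bs U_1),\ldots,g^*(\bs U_n)$ plus the two adversarial corrections.
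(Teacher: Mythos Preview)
Your proof is correct and follows essentially the same approach as the paper: insert and subtract $f(g^*(\bs U_i))$, then control the noise via the Lipschitz constant and the outliers via the uniform bound $2L_{\mathcal F}$. The only cosmetic difference is that the paper introduces auxiliary variables $\bs Y_i = g^*(\bs U_i)+\bs\xi_i$ for \emph{all} $i$ and applies the Lipschitz bound to the full sum $\sum_{i=1}^n\|\bs\xi_i\|$, whereas you apply it only over $i\in\mathcal I$; both give $L_{\mathcal F}\sigma$ after taking expectations, and your remark about $M_{\mathcal F}=L_{\mathcal F}$ matches the paper's (slightly inconsistent) notation.
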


\begin{proof}
The triangle inequality yields
\begin{align}
    \textup{StochErr}_n \le \mathbf E\bigg[
    \sup_{f\in \mathcal F}\bigg|\frac1n\sum_{i=1}^n \big\{f(\bs X_i) 
    - (f\circ g^*)(\bs U_i)\big\} \bigg|\bigg] + \textup{NoiseFreeStochErr}_n.
\end{align}
Let us define $\bs Y_i = g^*(\bs U_i) + \bs \xi_i $ for $i=1,\ldots,n$. 
The third item of \hyperlink{AssA}{Assumption A} implies that 
$\bs Y_i = \bs X_i$ for $i\in\mathcal I$. For
$i\not\in\mathcal I$, we have $|f(\bs X_i)-f(\bs Y_i)|\le 2M_{\mathcal F}$. Therefore,
the first term in the right hand side of the last display can be further bounded 
as follows:
\begin{align}
    \bigg|\frac1n\sum_{i=1}^n \big\{f(\bs X_i) 
    - (f\circ g^*)(\bs U_i)\big\} \bigg|
    & \le \bigg|\frac1n\sum_{i=1}^n \big\{f(\bs Y_i) 
    - (f\circ g^*)(\bs U_i)\big\} \bigg| + \frac{2M_{\mathcal F}(n-n_{\mathcal I})}{n}\\
    &\le \frac{L_{\mathcal F}}n\sum_{i=1}^n \big\|\bs Y_i 
    -  g^*(\bs U_i)\big\| + 2 M_{\mathcal F}\, \varepsilon\\
    &= \frac{L_{\mathcal F}}n\sum_{i=1}^n \big\|\bs \xi_i\big\| + 
    2 M_{\mathcal F}\,\varepsilon.
\end{align}
To get the claimed result, it suffices to take the expectation of both sides 
of the last display.
\end{proof}
The next step consists in upper bounding the stochastic error in the noise free 
case. If we use the notation $\mathcal F\circ g^* = \{f\circ g^*:f\in\mathcal F\}$,
the noise free stochastic error can be written as 
\begin{align}\label{NFSE}
    \textup{NoiseFreeStochErr}_n = 
    \mathbf E[\sfd_{\mathcal F\circ g^*}(\hat P_{U,n},\mathcal U_d)].
\end{align}
We see that the problem is reduced to that of evaluating the distance 
between the uniform distribution and the empirical distribution of $n$ 
independent random points uniformly distributed on the unit hypercube. 
In order to upper bound this distance, we first show that the class
$\mathcal F\circ g^*$, under the assumptions of Theorem~\ref{thm:noisy}, is 
included in a smoothness class of order $\alpha$. The precise statement is 
the following.

\begin{lem}\label{lem:composition_bounded_derivatives}
Let $g : [0, 1]^d \to [0, 1]^D$ and $h : [0, 1]^D \to [-1, 1]$ two 
mappings such that $g\in\mathcal W^{\alpha}([0,1]^d,L)$ and $h\in\mathcal 
W^{\alpha}([0,1]^D,1)$ for some $\alpha\in\mathbb N^*$ and some $L\ge 1$. Then, 
there exists a constant $C = C(D, d, \alpha)$ such that 
\begin{align}
    \lvert\textup{\tt D}^{\bs k}(h \circ g)(\bs x) \rvert \leq C L^{\alpha}, \quad 
    \forall \bs x \in [0, 1]^d,
\end{align}
for every multi-index $\bs k = (k_1, \dots , k_d) \in 
\N^d$ such that $\lvert \bs k \rvert \leq \alpha$.
\end{lem}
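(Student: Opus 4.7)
The plan is to prove the stated bound by induction on $m = |\bs k|$, for $0 \le m \le \alpha$, by establishing a structural claim that is slightly stronger than the bound itself. Concretely, I will show that $\texttt{D}^{\bs k}(h \circ g)(\bs x)$ can be expanded as a finite sum, containing at most $N = N(\alpha, d, D)$ terms, each of the form
\begin{align}
c \cdot (\texttt{D}^{\bs j} h)(g(\bs x)) \cdot \prod_{s=1}^p (\texttt{D}^{\bs m_s} g_{i_s})(\bs x),
\end{align}
where $c$ is an integer combinatorial coefficient bounded by a constant depending only on $\alpha,d,D$, $p \in \{0,1,\ldots,m\}$ (the case $p=0$ corresponding to the single term $h(g(\bs x))$ when $m=0$), $\bs j \in \N^D$ satisfies $|\bs j| = p$, each $i_s \in \{1,\ldots,D\}$, and the multi-indices $\bs m_s \in \N^d$ satisfy $|\bs m_s| \ge 1$ and $\sum_{s=1}^p |\bs m_s| = m$. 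This is essentially the content of the multivariate Faà di Bruno formula, so one could alternatively invoke it as a black box; I prefer the inductive route because it avoids the heavier combinatorial notation.

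The base case $m = 0$ is immediate because $h$ is valued in $[-1,1]$. For the inductive step from $m-1$ to $m \le \alpha$, I will write $\bs k = \bs k' + \bs e_\ell$ with $|\bs k'| = m-1$ and apply $\partial_{x_\ell}$ to the expansion of $\texttt{D}^{\bs k'}(h \circ g)$ furnished by the induction hypothesis. Differentiating a generic summand produces, by the product rule, either (a) a term in which $(\texttt{D}^{\bs j} h)(g(\bs x))$ is replaced via the chain rule by $\sum_{q=1}^D (\texttt{D}^{\bs j + \bs e_q} h)(g(\bs x)) \cdot \partial_\ell g_q(\bs x)$, which increases the order of the $h$-derivative by one and adds a new $g$-factor of order one, or (b) a term in which a single factor $\texttt{D}^{\bs m_s} g_{i_s}$ is replaced by $\texttt{D}^{\bs m_s + \bs e_\ell} g_{i_s}$. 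In both cases the three structural invariants propagate: the order of the $h$-derivative equals the number of $g$-factors, the sum of orders of $g$-derivatives increases from $m-1$ to $m$, and each $|\bs m_s|$ stays bounded by $m \le \alpha$. The total number of terms is multiplied by at most $D + p \le D + \alpha$ at each step, so $N(\alpha, d, D)$ remains finite.

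To finish, I will bound each summand individually. Because $|\bs j| = p \le m \le \alpha$ and $h \in \mathcal{W}^{\alpha}([0,1]^D, 1)$, the factor $|(\texttt{D}^{\bs j} h)(g(\bs x))|$ is at most $1$. Because each coordinate $g_{i_s}$ belongs to $\mathcal{W}^{\alpha}([0,1]^d, L)$ and $|\bs m_s| \le m \le \alpha$, every factor $|(\texttt{D}^{\bs m_s} g_{i_s})(\bs x)|$ is at most $L$, so the product of the $p$ factors involving $g$ is bounded by $L^p$. Since $L \ge 1$ and $p \le \alpha$, this is in turn bounded by $L^\alpha$. Summing over the at most $N(\alpha,d,D)$ terms and absorbing the combinatorial coefficients into a constant $C = C(\alpha,d,D)$ yields $|\texttt{D}^{\bs k}(h \circ g)(\bs x)| \le C L^\alpha$, as claimed.

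The main obstacle is the bookkeeping in the inductive step, namely verifying that the invariant ``number of $g$-factors equals order of the $h$-derivative'', together with the bound ``each $|\bs m_s| \le m$'', is preserved under both the product rule and the chain rule. Once this is in place, the final estimate is straightforward since the inequality $L \ge 1$ lets me collapse $L^p$ into $L^\alpha$ uniformly over all summands.
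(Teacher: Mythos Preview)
Your proof is correct and follows essentially the same approach as the paper: both arguments rest on the multivariate Fa\`a di Bruno expansion of $\texttt{D}^{\bs k}(h\circ g)$ as a sum of products of one derivative of $h$ (of order $p\le |\bs k|$) times $p$ derivatives of coordinates of $g$, then bound each factor using $|\texttt{D}^{\bs j}h|\le 1$, $|\texttt{D}^{\bs m_s}g_{i_s}|\le L$, and $L\ge 1$ to collapse $L^p$ into $L^\alpha$. The only difference is that the paper invokes the explicit formula of Fraenkel (a ready-made version of Fa\`a di Bruno) and bounds the resulting combinatorial sums, whereas you re-derive the needed structural form by induction on $|\bs k|$ via the product and chain rules; your route is more self-contained, the paper's is shorter once the citation is granted.
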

This lemma, in conjunction with \eqref{NFSE} and the assumption $g^*
\in \mathcal W^{\alpha}([0,1]^d,L)$, 
implies that
\begin{align}
    \textup{NoiseFreeStochErr}_n &\le 
    \mathbf E[\sfd_{\mathcal W^{\alpha}([0,1]^d,CL^{\alpha})}(\hat P_{U,n},\mathcal U_d)]\\
    &= C L^{\alpha}  \mathbf E[\sfd_{\mathcal W^{\alpha}([0,1]^d,1)} (\hat P_{U,n},
    \mathcal U_d)].
\end{align}
The last step is to use \citet[Theorem 4]{schreuder2020bounding}, which
provides the inequality 
\begin{align}
    \mathbf E[\sfd_{\mathcal W^{\alpha}([0,1]^d,CL^{\alpha})}(\hat P_{U,n}, 
    \mathcal U_d)] &\le \tilde C L^{\alpha} n^{-(\alpha\wedge d/2)/d}
    (1+\mathds 1_{\alpha = d/2}\log n).
\end{align} 
This completes the proof of the theorem. 

\subsection{Image of a smoothness class by a smooth function}
\label{ssec:proof_comp}

\begin{myproof}[Proof of Lemma~\ref{lem:composition_bounded_derivatives}]
The proof relies on \citet[Formula B]{fraenkel1978formulae} providing 
an explicit formula for derivatives of composite functions: for any 
multi-index $\bs k$ such that $1 \leq \lvert\bs k \rvert \leq \alpha$ 
and for any $\bs x \in [0, 1]^d$,  
\begin{align}\label{Diff}
\textup{\tt D}^{\bs k}(h \circ g)(\bs x) = \bs k! \sum_{\bs{a}:
1 \leq \lvert \bs{a} \rvert \leq \lvert \bs k \rvert} 
\frac{(\textup{\tt D}^{\bs {a}} h)(g(\bs x))}{\bs{a}!} Q_{\bs k, \bs{a}}(g ;\bs x),
\end{align}
where $Q_{\bs k, \bs{a}}(g; \cdot)$ is a homogeneous polynomial 
of degree $\lvert \bs{a} \rvert$ in derivatives of $g_1, \dots, g_D$. 
Since the partial derivatives of $h$ of any order up to $\alpha$ are 
bounded by one, we infer from the last display that
\begin{align}\label{Diff1}
\big|\textup{\tt D}^{\bs k}(h \circ g)(\bs x)\big| = \bs k! \sum_{\bs{a}:
1 \leq \lvert \bs{a} \rvert \leq \lvert \bs k \rvert} 
\frac{1}{\bs{a}!} \big|Q_{\bs k, \bs{a}}(g ;\bs x)|.
\end{align}
We can give an explicit expression of $Q_{\bs k, \bs{a}}$ using the
following notation. Let $r$ be the cardinality of the set 
$\{ \bs\beta \in \mathbb N^d\,  |\, 0 < \bs\beta \leq \bs\gamma \}$ and  
$\bs\beta(1), \ldots, \bs\beta(r)$ be its elements somehow enumerated. 
Define, for $\bs\gamma\in\mathbb N^d$ and for $a\in\mathbb N$, the set 
of multi-indices
\begin{align}
    R(\bs\gamma, a) = \bigg\{ \bs\rho \in \mathbb N^r\, \big|\, 
    \sum_{j=1}^r \rho_j \bs\beta(j) = \bs\gamma, \lvert \bs \rho 
    \rvert = a \bigg\},
\end{align}
and, for any $v:\mathbb R^d\to\mathbb R$, the polynomials
\begin{align}\label{form:P}
    P_{\bs\gamma}(a, v ;\bs x) = \sum_{\bs\rho \in R(\gamma, a)} 
    \frac{a!}{\bs\rho!} \prod_{j=1}^r \frac{(\textup{\tt D}^{\bs\beta(j)}v(\bs 
    x))^{\rho_j}}{\bs \beta(j)!}.
\end{align}
The functions $Q_{\bs k, \bs{a}}$ in \eqref{Diff1} are given by
\begin{align}
    Q_{\bs k, \bs{a}}(g ;\bs x) = \sum_{\bs\gamma(1) + \ldots + 
    \bs\gamma(D) = \bs k} \prod_{m=1}^D P_{\bs\gamma(m)}({a}_m, g_m ; \bs x).
\end{align}
Since, according to the conditions of the lemma, all the partial 
derivatives of $g$ appearing in \eqref{form:P} for $v=g_m$ are 
bounded by $L\ge 1$, we have 
\begin{align}
    \big\lvert P_{\bs\gamma(m)}({a}_m, g_m; \bs x) \big\rvert \leq 
    \sum_{\bs\rho \in R(\bs\gamma(m), {a}_m)} L^{|\bs \rho|}
    \frac{{a}_m!}{\bs \rho!} \prod_{j=1}^r \frac1{\bs\beta(j)!}.
\end{align}
Since $|\bs\rho|\le {a}_m$ and $|\bs{a}|\le |\bs k|\le \alpha$, 
this leads to
\begin{align}
     \big| Q_{\bs k, \bs{a}}(g ; \bs x)\big| &\leq  L^{\alpha}
     \bs{a}!
     \sum_{\bs\gamma(1) + \ldots + \bs\gamma(D) = \bs k} 
     \prod_{m=1}^D \left(\sum_{\bs\rho \in R(\bs\gamma(m), {a}_m)} \frac{1}{\bs\rho!} \prod_{j=1}^r \frac1{\bs\beta(j)!}   \right).
\end{align}
Combining this inequality with \eqref{Diff1}, we arrive at
\begin{align}
\lvert \textup{\tt D}^{\bs k}(h \circ g)(\bs x) \rvert &\leq  
L^{\alpha} \bs k! \sum_{1 \leq 
\lvert\bs {a} \rvert \leq \lvert \bs k \rvert} 
\sum_{\bs\gamma(1) + \dots + \bs\gamma(D) = \bs{a}}  
\prod_{m=1}^D \left( \sum_{\bs\rho \in R(\bs\gamma(m), {a}_m)} 
\frac{1}{\bs\rho!} \prod_{j=1}^r \frac1{\bs\beta(r)!}   \right).
\end{align}
Denoting by $C(D, d, \alpha)$ the maximum of the right hand side over all 
multi-indices $\bs k$ such that $|\bs k|\le \alpha$, we get the claim of 
the lemma. 
\end{myproof}

\subsection{Proof of the lower bounds in \texorpdfstring{Theorem~\ref{thm:3}}{Theorem 3}}\label{ssec:proof_thm3}

Since the bound we wish to prove does not depend on the
dimension, we assume without loss of generality that $D=d$. 
First, we start by considering the case $\sigma + \varepsilon
\ge  2/n^{1/2}$. 

Let us define $g^* (\bs x) = (2\bs x+1)/4$. This function
is clearly $1$-Lipschitz. Let $\xi_1$ be a random variable
drawn from the uniform in $[0,1]$ distribution. We define 
$P_0^{(n)}$ to be the distribution of i.i.d. vectors $\bs X_1,
\ldots,\bs X_n$ such  that $\bs X_i \stackrel{\rm dist}{\sim} 
g^*(\bs U) + \sigma 
\xi_1$ for $i=1,\ldots,n\varepsilon$ and $\bs X_i = 
(1,\ldots,1)^\top$ for $i>n\varepsilon$. Then, it is clear
that $P_0^{(n)}\in \mathcal P_{n,D}\big(d,\sigma,(1 - 
\varepsilon)n\big)$ and 
\begin{align}
    \mathbf E_{ P_0^{(n)}}[
    \mathsf{d}_{\mathcal F}(\hat P_n,P^*)]
    &=\mathbf E_{ P_0^{(n)}}\bigg[\sup_{f\in\mathcal F}
    \bigg| \frac1n\sum_{i=1}^n f(\bs X_i) -\mathbf E
    [(f\circ g^*)(\bs U)]\bigg|\bigg]\\
    &\ge \mathbf E_{ P_0^{(n)}}\bigg[
    \bigg| \frac1n\sum_{i=1}^n X_{i,1} -\mathbf E
    [g^*(\bs U)_1]\bigg|\bigg]\\
    &= \mathbf E_{ P_0^{(n)}}\bigg[
    \bigg| \frac1n\sum_{i=1}^n \big(X_{i,1} - \mathbf E[X_{i,1}]
    \big) + \varepsilon + 0.5\sigma - \varepsilon\mathbf E
    [g^*(\bs U)_1]\bigg|\bigg]\\
    &= \mathbf E_{ P_0^{(n)}}\bigg[
    \bigg| \frac1n\sum_{i=1}^n \big(X_{i,1} - \mathbf E[X_{i,1}]
    \big) + 0.5(\sigma  + \varepsilon)\bigg|\bigg].\label{ineq3:1}
\end{align}
The first inequality above follows by replacing the sup over
$\cal F$ by the corresponding expression evaluated at the
representer $f_0(\bs x) = x_1$. The third line above
follows from $\mathbf E[X_{i,1}] = \mathbf E[g^*(\bs U)_1] + 0.5\sigma$ 
if $i\le n\varepsilon$ whereas $\mathbf E[X_{i,1}] = 1$ if $i >
n\varepsilon$. The last line is a consequence of $\mathbf 
E[g(\bs U)_1] = 0.5$. Combining the above lower bound with
the triangle inequality, we arrive at
\begin{align}
    \mathbf E_{ P_0^{(n)}}[
    \mathsf{d}_{\mathcal F}(\hat P_n,P^*)]
    &\ge  0.5(\sigma  + \varepsilon) -
    \mathbf E_{ P_0^{(n)}}\bigg[
    \bigg| \frac1n\sum_{i=1}^n \big(X_{i,1} - \mathbf E[X_{i,1}]
    \big)\bigg|\bigg]\\
    &\ge  0.5(\sigma  + \varepsilon) -
    \bigg(\mathbf E_{ P_0^{(n)}}\bigg[
    \bigg| \frac1n\sum_{i=1}^n \big(X_{i,1} - \mathbf E[X_{i,1}]
    \big)\bigg|^2\bigg]\bigg)^{1/2}\\
    &\ge  0.5(\sigma  + \varepsilon) -0.5/\sqrt{n}\\
    &\ge  \big(\sigma  + \varepsilon + 1/\sqrt{n}\big)/6.
\end{align}
To get the second line above, we used that the first-order
moment is bounded by the second-order moment. In the third line,
we used that the variance of the sum of independent random variables is
the sum of variances and that the variance of a random variables
taking its values in $[0,1]$ is always $\le 1/4$. Finally, the last
line is derived from the assumption $\sigma + \varepsilon
\ge 2/\sqrt{n}$. 

We now turn to the case $\sigma + \varepsilon \le 2/\sqrt{n}$. 
In this case, we use the same distribution $P_0^{(n)}$ as in the
previous case but we choose $\sigma = \varepsilon = 0$. From
\eqref{ineq3:1} we derive that
\begin{align}
    \mathbf E_{ P_0^{(n)}}\big[
    \mathsf{d}_{\mathcal F}(\hat P_n,P^*)\big]
    &\ge \mathbf E_{ P_0^{(n)}}\bigg[
    \bigg| \frac1n\sum_{i=1}^n \big(X_{i,1} - \mathbf E[X_{i,1}]
    \big)\bigg|\bigg]\\
    &\ge 0.5\,\mathbf E_{U_i\stackrel{\rm iid}{\sim}
    \mathcal U_1}\bigg[
    \bigg| \frac1n\sum_{i=1}^n (U_i - 0.5)
    \bigg|\bigg]
    \ge 0.105/\sqrt{n}\label{ineq3:2}
\end{align}
In view of the assumption  $\sigma + \varepsilon \le 2/\sqrt{n}$, 
this leads to
\begin{align}
    \mathbf E_{ P_0^{(n)}}\big[
    \mathsf{d}_{\mathcal F}(\hat P_n,P^*)\big]
    &\ge \mathbf E_{ P_0^{(n)}}\bigg[
    \bigg| \frac1n\sum_{i=1}^n \big(X_{i,1} - \mathbf E[X_{i,1}]
    \big)\bigg|\bigg]\\
    &\ge 0.5\,\mathbf E_{U_i\stackrel{\rm iid}{\sim}
    \mathcal U_1}\bigg[
    \bigg| \frac1n\sum_{i=1}^n (U_i - 0.5)
    \bigg|\bigg]
    \ge 0.035(\sigma + \varepsilon + 1/\sqrt{n}),
\end{align}
which completes the proof of the first inequality of the theorem. 
For the second inequality, it suffices to combine the first inequality
with the lower bound established in the seminal paper 
\citet{dudley1969speed}. 

\subsection{Proof of the lower bound in Theorem~\ref{thm:risk_lower_bound}} \label{ssec:7.5}

We split the proof of Theorem~\ref{thm:risk_lower_bound} 
into two propositions: The first one shows the tightness of the dependence on the contamination rate whereas the second one establishes the 
tightness of the dependence on the noise-level.

\begin{prop}[Tightness wrt to the contamination rate]
Under the assumptions of Theorem~\ref{thm:risk_lower_bound},
\begin{align}
    \inf_{\hat{g}_n} \sup_{g^*} \sup_{P^{(n)} \in \mathcal P_{n,D}(d,\sigma,\varepsilon,g^*)} \mathbf{E}[R_{\sfd_\mathcal{F}, P^*}(\hat{g}_{n})] \geq  \varepsilon/3.
\end{align}
\end{prop}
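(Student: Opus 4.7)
The plan is to invoke Le Cam's two-point method. I will exhibit two affine generators $g^*_0,g^*_1\in\mathcal G^*$ such that their push-forwards $P^*_0,P^*_1$ are separated in $\sfd_{\mathcal F}$ by an amount of order $\varepsilon$, yet can be made indistinguishable at the observation level by carefully chosen outlier distributions. The triangle inequality will then prevent any generator from tracking both targets simultaneously.

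Concretely, since the bound is dimension-free, I work with the first coordinate (embedding scalar values as $t\bs e_1\in[0,1]^D$), and set
\begin{align}
g^*_0(\bs u)=(1-\varepsilon)u_1\,\bs e_1,\qquad g^*_1(\bs u)=\big(\varepsilon+(1-\varepsilon)u_1\big)\bs e_1,
\end{align}
which are affine maps $[0,1]^d\to[0,1]^D$, hence in $\mathcal G^*$. Then $P^*_0$ and $P^*_1$ are uniform distributions on $[0,1-\varepsilon]\cdot\bs e_1$ and $[\varepsilon,1]\cdot\bs e_1$ respectively. Applying the test function $f_0(\bs x)=x_1\in\mathcal F$ yields $\sfd_{\mathcal F}(P^*_0,P^*_1)\ge|\mathbf E_{P^*_0}[x_1]-\mathbf E_{P^*_1}[x_1]|=\varepsilon$. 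Choosing the outlier distributions $Q_0=\mathcal U([1-\varepsilon,1]\cdot\bs e_1)$ and $Q_1=\mathcal U([0,\varepsilon]\cdot\bs e_1)$, a direct density computation gives
\begin{align}
(1-\varepsilon)P^*_0+\varepsilon Q_0=\mathcal U([0,1]\cdot\bs e_1)=(1-\varepsilon)P^*_1+\varepsilon Q_1,
\end{align}
so the contaminated marginal is the same under both configurations. Hence the law of $\hat g_n\sharp\mathcal U_d$ coincides under both models, and the triangle inequality gives
\begin{align}
\mathbf E_0\big[\sfd_{\mathcal F}(\hat g_n\sharp\mathcal U_d,P^*_0)\big]+\mathbf E_1\big[\sfd_{\mathcal F}(\hat g_n\sharp\mathcal U_d,P^*_1)\big]\ge\sfd_{\mathcal F}(P^*_0,P^*_1)\ge\varepsilon,
\end{align}
so the worst of the two expected risks is at least $\varepsilon/2\ge\varepsilon/3$.

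The main delicate point, and the reason for the slack between $\varepsilon/2$ and the stated $\varepsilon/3$, is to ensure that the resulting joint distribution of $(\bs X_1,\ldots,\bs X_n)$ actually belongs to $\mathcal P_{n,D}(d,\sigma,\varepsilon,g^*_i)$, since under the i.i.d.\ mixture the number of outliers is random and could exceed $\varepsilon n$ with small probability. I will handle this by drawing the inlier/outlier labels $\bs Z_i$ with a slightly deflated Bernoulli parameter $\varepsilon'<\varepsilon$ (so that $\sum \bs Z_i\le\varepsilon n$ holds with probability arbitrarily close to one by Chernoff's inequality and can be made to hold almost surely by a harmless resampling of the outliers on the bad event), and then repeat the computation above with $\varepsilon'$ in place of $\varepsilon$. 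Choosing $\varepsilon'$ sufficiently close to $\varepsilon$ yields the stated constant $\varepsilon/3$ uniformly in $n$.
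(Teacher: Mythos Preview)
Your two–point construction is exactly the one used in the paper: the same pair of affine generators $g^*_0,g^*_1$, the same outlier laws $Q_0,Q_1$, the same identity $(1-\varepsilon)P^*_0+\varepsilon Q_0=(1-\varepsilon)P^*_1+\varepsilon Q_1=\mathcal U([0,1])$, and the same triangle-inequality step yielding a lower bound $\varepsilon/2$ in the i.i.d.\ Huber mixture model.

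The only substantive difference is how you pass from the Huber i.i.d.\ mixture model to the model $\mathcal P_{n,D}(d,\sigma,\varepsilon,g^*)$ of Assumption~A, and this is where your argument has a gap. You correctly note that under the i.i.d.\ mixture the outlier count $\sum_i\bs Z_i$ may exceed $\varepsilon n$, so the sample law need not belong to $\mathcal P_{n,D}(d,\sigma,\varepsilon,g^*_j)$. Your proposed fix --- deflate to $\varepsilon'<\varepsilon$ and ``resample the outliers on the bad event'' --- does not work as stated: any modification of the sample on the event $\{\sum_i\bs Z_i>\varepsilon n\}$ is hypothesis-dependent (the outlier labels $\bs Z_i$ are latent and their joint law with the $\bs X_i$'s differs between the two hypotheses), so it destroys the equality $P_0^{(n)}=P_1^{(n)}$ on which your Le~Cam step relies. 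Equivalently, conditioning the i.i.d.\ mixture on $\{\sum_i\bs Z_i\le\varepsilon n\}$ yields two different observation laws, even though the unconditional laws coincide.

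The paper resolves this precisely by an explicit reduction: it first restricts to the Huber \emph{deterministic} contamination model (a subclass of $\mathcal P_{n,D}$), and then invokes \cite[Prop.~1]{bateni2020} to lower-bound the HDC worst-case risk by the HC worst-case risk (at contamination level $\varepsilon/2$) minus an $e^{-n\varepsilon/6}L$ penalty. This penalty is absorbed using the standing assumption $n\ge(6/\varepsilon)\log(20L/\varepsilon)$, and is exactly why the constant degrades from $\varepsilon/2$ to $\varepsilon/3$. In particular the bound is \emph{not} uniform in $n$, contrary to your final sentence; the sample-size condition in Theorem~\ref{thm:risk_lower_bound} is genuinely needed.
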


\begin{proof}
It can be easily checked that the supremum of the expected risk over $\mathcal P_{n,D}(d,\sigma,\varepsilon,g^*)$ is always not smaller than the supremum of the same quantity over $\mathcal P_{n,1}(d,0,\varepsilon,g^*)$. To ease notation, we write $\mathcal P_{n}(d,\varepsilon,g^*) = \mathcal P_{n,1}(d,0,\varepsilon,g^*)$ and also set $\mu = \mathcal U_d$. 

\paragraph{Step 1: Reduction to Huber contamination model.} Note that the set of admissible data distributions $\mathcal{P}_{n, D}(d, \varepsilon,g^*)$ comprises the data distributions from Huber's deterministic contamination model \cite[Section 2.2]{bateni2020}, namely data distributions such that a (deterministic) proportion $(1-\varepsilon)$ of the data is distributed according to a reference distribution $P^*$ while the remaining proportion $\varepsilon$ is independently drawn from another distribution $Q$. Therefore, denoting by $\mathcal{P}^{\rm HDC}_{n}(d, \varepsilon, g^*)$ such distributions, it holds, for any estimator $\hat{g}_n$ and generator $g^*$,
\begin{align}
    \sup_{P^{(n)} \in \mathcal{P}_{n}(d, \varepsilon,g^*)} \mathbf{E}[R_{\sfd_\mathcal{F}, P^*}(\hat{g}_{n})] & = 
    \sup_{P^{(n)} \in \mathcal{P}_{n}(d, \varepsilon, g^*)} \mathbf{E}[\mathsf{d}_{\mathcal F} (g^*\sharp\mu, \hat{g}_n\sharp\mu)] \\ 
    &\geq \sup_{P^{(n)} \in \mathcal{P}^{\rm HDC}_{n}(d, \varepsilon, g^*)} \mathbf{E}[\mathsf{d}_{\mathcal F} (g^*\sharp\mu, \hat{g}_n\sharp\mu)].
\end{align}
Furthermore, let us denote by $\mathcal{P}^{\rm HC}_{D}(d, \varepsilon,g^*)$ the set of data distributions such that there is a distribution $Q$ defined on the same space as a reference distribution $P^* = g^*\sharp\mu$ such that the observations $\bs X_1, \dots, \bs X_n$ are independent and drawn from the mixture distribution $(1-\varepsilon)P^* + \varepsilon Q$. In view of 
$\sup_{g,g'} d_{\mathcal F}(g\sharp \mu,g'\sharp\mu)\le L$ and \cite[Proposition 1]{bateni2020},  for any estimator $\hat{g}_n$ and generator $g^*$, we have
\begin{align}
    \sup_{P^{(n)} \in \mathcal{P}^{\rm HDC}_{n}(d, \varepsilon, g^*)} \mathbf{E}[\mathsf{d}_{\mathcal F} (g^*\sharp\mu, \hat{g}_n\sharp\mu)] \geq \sup_{P^{(n)} \in \mathcal{P}^{\rm HC}_{n}(d, \varepsilon/2,g^*)} \mathbf{E}[\mathsf{d}_{\mathcal F} (g^*\sharp\mu, \hat{g}_n\sharp\mu)] - e^{-n\varepsilon/6}L.
\end{align}
The second step consists in lower bounding the
risk in the Huber contamination model using an
argument based on two simple hypotheses.

\paragraph{Step 2: Construction of hypotheses.} Let us define the generators $g_1^*, g_2^* : [0, 1]^d \to [0, 1]$ as 
\begin{align}
    g_1^*(\bs u) = (1-\varepsilon)u_1  \quad\text{ and }\quad
    g_2^*(\bs u) = (1-\varepsilon)u_1 + \varepsilon, \quad \text{ for } \bs u=(u_1, \dots, u_d) \in [0, 1]^d.
\end{align}
For contamination distributions $Q_1 \coloneqq \mathcal{U}([1-\varepsilon, 1])$ and 
$Q_2\coloneqq \mathcal{U}([0, \varepsilon])$, define the data generating distributions
\begin{align}
    P_1^{(n)} = [(1-\varepsilon) g^*_1\sharp\mu + \varepsilon Q_1]^{\otimes n} \text{ and }
    P_2^{(n)} = [(1-\varepsilon) g^*_2\sharp\mu + \varepsilon Q_2]^{\otimes n}.
\end{align}
One can easily check that $P_1^{(n)} = P_2^{(n)} =\mathcal{U}([0, 1])^{\otimes n}$ and 
$P_j^{(n)}\in \mathcal{P}^{\rm HC}_{n}(d, \varepsilon,g_j^*)$ for $j=1,2$.
Using the fact that the maximum is larger than the arithmetic mean, in conjunction with the triangular inequality, we obtain
\begin{align}
    \sup_{g^*}\sup_{P^{(n)} \in \mathcal{P}^{\rm HC}_{n}(d, \varepsilon,g^*)} \mathbf{E}[\mathsf{d}_{\mathcal F} (g^*\sharp\mu, \hat{g}_n\sharp\mu)] &\geq \frac{1}{2}\left[ \mathbf{E}_{P_1^{(n)}} \mathsf{d}_{\mathcal F} (g_1^*\sharp\mu, \hat{g}_n\sharp\mu) + \mathbf{E}_{P_2^{(n)}} \mathsf{d}_{\mathcal F} (g_2^*\sharp\mu, \hat{g}_n\sharp\mu) \right]\\
    &= \frac{1}{2} \mathbf{E}_{P_0^{(n)}}\left[  \mathsf{d}_{\mathcal F} (g_1^*\sharp\mu, \hat{g}_n\sharp\mu) + \mathsf{d}_{\mathcal F} (g_2^*\sharp\mu, \hat{g}_n\sharp\mu) \right]\\
    &\geq \frac{1}{2} \mathsf{d}_{\mathcal F} (g_1^*\sharp\mu, g_2^*\sharp\mu)
    \geq \varepsilon/2.
\end{align}
The last inequality comes from choosing the representer $f(\bs u) = u_1$ from $\mathcal F$. 

\paragraph{Conclusion. } Combining the previous two
steps, we get 
\begin{align}
    \sup_{P^{(n)} \in \mathcal{P}^{\rm HDC}_{n}(d, \varepsilon, g^*)} \mathbf{E}[\mathsf{d}_{\mathcal F} (g^*\sharp\mu, \hat{g}_n\sharp\mu)] \geq (\nicefrac14)\varepsilon - e^{-n\varepsilon/6}L.
\end{align}
Choosing $n\ge (6/\varepsilon)\log(20L/\varepsilon)$, we get the claim of the proposition.
\end{proof}

\begin{prop}[Tightness wrt to the noise level]
Under the assumptions of Theorem~\ref{thm:risk_lower_bound}, we have
\begin{align}
    \inf_{\hat{g}_n \in \mathcal{G}} \sup_{g^*\in\mathcal G^*} 
    \sup_{P^{(n)} \in \mathcal P_{n,D}(d,\sigma,\varepsilon,g^*)} \mathbf{E}[\mathsf{d}_{\mathcal F} (g^*\sharp\mu, \hat{g}_n\sharp\mu)] \geq \sigma/2 .
\end{align}
\end{prop}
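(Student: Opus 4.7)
The plan is to mirror the two-hypothesis argument used for the contamination lower bound, tailored now to expose the role of the noise level. As in the previous proposition, the supremum over $\mathcal P_{n,D}(d,\sigma,\varepsilon,g^*)$ dominates the supremum over $\mathcal P_{n,D}(d,\sigma,0,g^*)$, so I reduce to $\varepsilon = 0$. Since $\mathcal G^*$ contains the affine functions, I take the two constant generators
\begin{align}
g_1^*(\bs u) \equiv \bs m_1 = (\tfrac12, 0, \ldots, 0)^\top, \qquad g_2^*(\bs u) \equiv \bs m_2 = (\tfrac12 + \sigma, 0, \ldots, 0)^\top,
\end{align}
both of which lie in $[0,1]^D$ because $\sigma \le 1/2$. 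For the noise, I take the deterministic vectors
\begin{align}
\bs\xi_i^{(1)} \equiv (\tfrac{\sigma}{2}, 0, \ldots, 0)^\top, \qquad \bs\xi_i^{(2)} \equiv (-\tfrac{\sigma}{2}, 0, \ldots, 0)^\top,
\end{align}
which satisfy $\max_i \mathbf E[\|\bs\xi_i^{(j)}\|_2] = \sigma/2 \le \sigma$. Under both hypotheses every observation equals the same deterministic vector $(1/2+\sigma/2, 0, \ldots, 0)^\top$, so the data laws coincide, $P_1^{(n)} = P_2^{(n)}$, and $P_j^{(n)}\in\mathcal P_{n,D}(d,\sigma,0,g_j^*)$ for $j=1,2$.

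With this construction in hand, the two-point lower bound proceeds exactly as in Step 2 of the preceding proof. Bounding the supremum over $g^*$ by the arithmetic mean of the two hypotheses and then applying the triangle inequality gives
\begin{align}
\sup_{g^*}\sup_{P^{(n)}} \mathbf E[\sfd_{\mathcal F}(g^*\sharp\mu, \hat g_n\sharp\mu)]
&\ge \tfrac12\,\mathbf E_{P_1^{(n)}}\bigl[\sfd_{\mathcal F}(g_1^*\sharp\mu, \hat g_n\sharp\mu) + \sfd_{\mathcal F}(g_2^*\sharp\mu, \hat g_n\sharp\mu)\bigr]\\
&\ge \tfrac12\,\sfd_{\mathcal F}(g_1^*\sharp\mu, g_2^*\sharp\mu),
\end{align}
where the equality $P_1^{(n)} = P_2^{(n)}$ was used to merge the two expectations under a common law. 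Since $g_j^*\sharp\mu = \delta_{\bs m_j}$ and the first-axis projection $f(\bs x) = x_1$ belongs to $\mathcal F$ by hypothesis, the remaining IPM is at least $|m_{1,1}-m_{2,1}| = \sigma$, which yields the claimed $\sigma/2$ bound.

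I do not anticipate any serious obstacle: the items to check are that $g_1^*, g_2^* \in \mathcal G^*$ (they are affine, in fact constant), that the noise moment constraint of \hyperlink{AssA}{Assumption A} is met (it equals $\sigma/2$ by construction), and that both means lie inside $[0,1]^D$ (guaranteed by $\sigma\le 1/2$). The adversarial formulation of \hyperlink{AssA}{Assumption A} allows $\bs U_i$ and $\bs\xi_i$ to be dependent, which is trivially satisfied here. The one design choice that really matters is to shift the two constant generators in opposite directions from a common observation point, since this simultaneously makes the data laws identical and lets the IPM separation saturate the full gap $|\bs m_1 - \bs m_2|$ along the first coordinate.
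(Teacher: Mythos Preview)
Your proof is correct and follows essentially the same two-hypothesis approach as the paper: construct two constant generators separated by $\sigma$ along the first coordinate, choose deterministic noise so that the observation laws coincide, and finish with the triangle inequality and the first-axis projection to obtain the $\sigma/2$ bound. The only differences are cosmetic (you shift symmetrically by $\pm\sigma/2$ around a center at $1/2$, while the paper reduces to $D=1$ and uses asymmetric shifts of $\sigma$ and $0$ from the origin).
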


\begin{proof}
Once again, without loss of generality we assume that $D=1$, 
$\varepsilon = 0$ and drop the dependence of different quantities 
on these two parameters. Recall that $\mu = \mathcal U_d$. 
Let us define the generators $g_j^*: [0, 1]^d \to 
[0, 1]^D$, $j=1,2$, by 
\begin{align}
    g_1^*(\bs u) \equiv 0\qquad  \text{ and }\qquad 
    g_2^*(\bs u) \equiv \sigma , \qquad \text{ for } \bs u=(u_1, \dots, u_d) \in [0, 1]^d.
\end{align}
These functions allow us to define the data generating distributions 
\begin{align}
        P_1^{(n)} = [g^*_1\sharp\mu * \delta_{\sigma } ]^{\otimes n} 
        \qquad\text{ and }\qquad 
    P_2^{(n)} = [g^*_2\sharp\mu*\delta_0]^{\otimes n}.
\end{align}
One can easily check that $P_1^{(n)} = P_2^{(n)}= 
\delta_{\sigma}^{\otimes n}$, which belongs to 
$\mathcal{P}_{n}(d, \sigma, g_1^*)\cap \mathcal{P}_{n}(d, \sigma, g_2^*)$. 
Furthermore, $g_j^*\in \mathcal G^*$ for $j=1,2$ since the latter contains all the affine functions. Using the same arguments as in the proof of the previous proposition, we arrive at
\begin{align}
 \sup_{g^*\in\mathcal G^*} \sup_{P^{(n)} \in \mathcal P_{n}(d,\sigma,g^*)} \mathbf{E}[\mathsf{d}_{\mathcal F} (g^*\sharp\mu, \hat{g}_n\sharp\mu)] &\geq \frac{1}{2} \mathbf{E}_{P_1^{(n)}}\left[  \mathsf{d}_{\mathcal F} (g_1^*\sharp\mu, \hat{g}_n\sharp\mu) + \mathsf{d}_{\mathcal F} (g_2^*\sharp\mu, \hat{g}_n\sharp\mu) \right]\\
&\geq \frac{1}{2} \mathsf{d}_{\mathcal F} (g_1^*\sharp\mu, g_2^*\sharp\mu)
    \geq \sigma/2. 
\end{align}
This completes the proof of the proposition. 
\end{proof}
To get the claim of Theorem~\ref{thm:risk_lower_bound}, it suffices to 
combine the claims of the last two propositions with the fact that
$(0.2\varepsilon \vee 0.5\sigma)\ge 0.1(\varepsilon + \sigma)$.

\acks{This work was partially supported by the grant Investissements d’Avenir (ANR11-IDEX-0003/Labex Ecodec/ANR-11-LABX-0047).}

\bibliography{bibliography.bib}


\end{document}